\documentclass[pdflatex]{sn-jnl}
\usepackage{float}
\usepackage{verbatim}
\usepackage{graphicx}
\usepackage{multirow}
\usepackage{amsmath,amssymb,amsfonts}
\usepackage{amsthm}
\usepackage{mathrsfs}

\usepackage[title]{appendix}
\usepackage{xcolor}
\usepackage{textcomp}
\usepackage{manyfoot}
\usepackage{booktabs}
\usepackage{algorithm}
\usepackage{algorithmicx}
\usepackage{tikz}
\usepackage{algpseudocode}
\usepackage{listings}
\usepackage{pgfplots}
\pgfplotsset{compat=1.18}
\newtheorem{Theorem}{Theorem}
\newtheorem{Proposition}[Theorem]{Proposition}
\newtheorem{Remark}[Theorem]{Remark}
\newtheorem{Lemma}[Theorem]{Lemma}
\newtheorem{Corollary}[Theorem]{Corollary}
\newtheorem{definition}[Theorem]{Definition}
\newtheorem{Example}[Theorem]{Example}
\newtheorem{remark}[Theorem]{Remark}

\begin{document}
\title[Article Title]{Busemann-coupling subdifferentials and Fenchel biconjugation on Hadamard manifolds}


\author*[1]{\sur{G. C. Bento}}\email{glaydston@ufg.br}

\author[2]{ \sur{J. X.  Cruz Neto}}\email{jxavier@ufpi.edu.br }
\equalcont{These authors contributed equally to this work.}

\author[2]{ \sur{I. D. L.  Melo}}\email{italodowell@ufpi.edu.br }
\equalcont{These authors contributed equally to this work.}

\affil*[1]{\orgdiv{Institute of Mathematics and Statistics, IME}, \orgname{Federal University of Goi\'as}, \orgaddress{\city{Goi\^ania}, \postcode{74690-900}, \state{Goi\'as}, \country{Brazil}}}

\affil[2]{\orgdiv{Departamento de Matemática}, \orgname{Universidade Federal do Piauí}, \orgaddress{\city{Teresina}, \postcode{64048-550}, \state{Piauí}, \country{Brasil}}}



\abstract{ 
We introduce an intrinsic subdifferential on Hadamard manifolds induced
by a base-point-dependent Busemann coupling, motivated by the scarcity
of nonconstant affine functions on non-Euclidean Hadamard manifolds. For
proper functions, it characterizes exactly the primal-dual equality
pairs in the Fenchel-Young inequality of Bento, Cruz Neto, and Melo
(\emph{Appl. Math. Optim.} 88:83, 2023). A
dual-point--tangent-vector bijection yields Fermat's rule and isometry
covariance. We characterize everywhere nonemptiness for finite-valued functions by pointwise-attained \(H_p\)-envelope representations, encompassing the
radial models with explicit subdifferentials and the nonsmooth distance
from the base point. In hyperbolic space, base-point rigidity shows
that, for functions differentiable at the base point, nonemptiness is
equivalent to global minimality and vanishing of the corresponding
Fenchel-Young gap.
Two geodesically convex functions on the Poincaré disk prove strict
noninclusions between our construction and fixed-direction Busemann
subdifferentials, although both recover the classical Euclidean
subdifferential. This separation is explained by coupling orientation.
In constant negative curvature, a strictly increasing scalar profile
describes the asymmetry and defines an intrinsic measure of nonlinearity. We compute this measure at arbitrary radius and derive a lower bound and two-sided estimates for the biconjugation gap; at unit radius, the exact curvature-dependent bound is attained by deviations of both signs.
}

\keywords{Subdifferential, Busemann function, Fenchel-Moreau, Hadamard manifold, Fenchel conjugate}


\pacs[MSC Classification]{90C25, MSC 90C26, MSC 53C20, MSC 53C18, MSC 53C80}

\maketitle

\section{Introduction}

Affine functions are fundamental to many areas of mathematics and
optimization because they provide global models, supporting minorants, and
natural primal--dual couplings. In Euclidean space, given
\(x,z\in\mathbb{R}^n\), the function
\begin{equation}\label{JCA-Euclidean}
    \mathbb{R}^n\ni y\longmapsto
    g_z^x(y):=\langle x-z,y-z\rangle
\end{equation}
is affine in \(y\), and linear in the translated variable \(y-z\). Its
affine and convex structure is used, for example, in the regularization of
the equilibrium resolvent
\begin{equation}\label{Newbi-Function}
    J_{\lambda}^F(z)
    :=
    \bigl\{
        x\in\Omega:
        \lambda F(x,y)-g_z^x(y)\geq0,\quad y\in\Omega
    \bigr\},
    \qquad \lambda>0,
\end{equation}
where the associated equilibrium problem consists of finding
\(x^*\in\Omega\) such that
\[
    F(x^*,y)\geq0,
    \qquad y\in\Omega.
\]
Here, \(\Omega\subset\mathbb{R}^n\) is a nonempty closed convex set and
\(F:\Omega\times\Omega\to\mathbb{R}\) is a bifunction satisfying
\(F(x,x)=0\) for every \(x\in\Omega\); see, for instance,
\cite{combettes2005equilibrium}. For equilibrium problems in other settings,
see, e.g.,
\cite{ansari1999existence,chadli2002equilibrium,iusem2003new,
iusem2009certain,batista2015existence,niculescu2009fan}.

The same coupling is central to Fenchel conjugation. Indeed, in \cite{bento2023fenchel}, we introduced the following more general, base-point-dependent formulation: for \(z\in\mathbb{R}^n\) and \(f:\mathbb{R}^n\to\mathbb{R}\cup\{+\infty\}\), define
\begin{equation}\label{conjugateReal1}
f_z^*(x)
:=
\sup\bigl\{
g_z^x(y)-f(y): y\in\mathbb{R}^n
\bigr\}.
\end{equation}
This formulation incorporates an arbitrary reference point \(z\). When \(z=0\), it reduces to the classical Fenchel conjugate; see \cite[Definition~13.1, p.~181]{bauschke2011convex}.
On a Riemannian manifold, the most immediate analogue of
\eqref{JCA-Euclidean} is
\begin{equation}\label{JCA-2017}
    M\ni y\longmapsto
    g_z^x(y):=
    \left\langle\exp_z^{-1}x,\exp_z^{-1}y\right\rangle,
    \qquad x,z\in M.
\end{equation}
However, the Euclidean affine structure is not preserved by this
construction. It was proved in \cite{kristaly2016what} that the function in
\eqref{JCA-2017} is affine if and only if \(M\) is isometric to Euclidean
space, while \cite{cruzneto2017note} shows that it need not even be convex on
a Hadamard manifold with negative sectional curvature. More fundamentally,
building on \cite[Theorem~2.2, p.~299]{udriste1994convex}, Bento, Cruz Neto,
and Melo proved in \cite{bento2023fenchel} that nonconstant affine functions
are unavailable on Hadamard manifolds with strictly negative sectional
curvature. Thus, the affine supports underlying classical equilibrium
regularization and Fenchel duality have no direct intrinsic counterpart in
this geometric setting.

This obstruction provided one of the original motivations for the research
program initiated by Bento, Cruz Neto, and Melo: to identify intrinsic
geometric substitutes for affine functions on Hadamard manifolds. Busemann
functions arise naturally for this purpose. Let
\(\gamma:[0,+\infty)\to M\) be a unit-speed geodesic ray emanating from
\(p\in M\), and let \(v=\dot\gamma(0)\in T_pM\), with \(\|v\|=1\). The
associated Busemann function \(B_v^p:M\to\mathbb{R}\) is defined by
\begin{equation*}\label{defib}
    B_v^p(x)
    :=
    \lim_{t\to+\infty}
    \bigl(d(x,\gamma(t))-t\bigr).
\end{equation*}
Busemann functions are convex and \(1\)-Lipschitz on Hadamard manifolds and,
in the Euclidean setting, reduce to affine functions. They therefore retain
precisely the global geometric features required to replace affine supports
in a fully intrinsic manner. For general discussions and examples, see
\cite{bridson2013metric,ballmann2013manifolds}.

Originating in the work of Herbert Busemann
\cite{busemann1955geometry}, Busemann functions have been extensively
studied in geometry and analysis; see, for instance,
\cite{busemann1993novel,li1987positive,sormani1998busemann,
shiohama1979busemann} and the references therein. In optimization on
Hadamard manifolds, these functions have been employed in connection
with two related problems: the construction of convex regularizations
for equilibrium resolvents and the development of Fenchel-type
conjugation in this geometric setting. More specifically,
\cite{bento2022combinatorial,bento2023fenchel} proposed,
respectively, the constructions
\begin{equation}\label{Bresolvent}
J_{\lambda}^F(x)
:=
\left\{
z\in\Omega:
\lambda F(z,y)+d(z,x)B_v^z(y)\geq0,\quad y\in\Omega
\right\},
\quad
\lambda>0,\quad
v=\frac{\exp_z^{-1}x}{d(z,x)},
\end{equation}
and
\begin{equation}\label{conjugateRiemannian1}
f_p^*(x)
=
\sup\left\{
tB_v^p(x)-f(\exp_p(-tv)):
t\geq0,\ v\in T_pM,\ |v|=1
\right\},
\end{equation}
\medskip

\noindent as counterparts on Hadamard manifolds of
\eqref{Newbi-Function} and \eqref{conjugateReal1}. In
\eqref{Bresolvent}, \(\Omega\subset M\) is a nonempty closed convex set
and \(F:\Omega\times\Omega\to\mathbb{R}\) satisfies \(F(x,x)=0\) for
every \(x\in\Omega\); in \eqref{conjugateRiemannian1}, \(p\in M\) and
\(f\to\mathbb{R}\cup{+\infty}\). Independently and contemporaneously
with \cite{bento2023fenchel}, Hirai \cite{hirai2023convex} developed an
asymptotic Legendre-Fenchel conjugation on Hadamard spaces using
Busemann functions.

These contributions form part of a broader line of research exploring
Busemann functions as geometric substitutes for affine functions in
continuous optimization on nonpositively curved spaces. Subsequent and
related developments include proximal and extragradient methods for
equilibrium problems, Busemann-based subgradient methods,
horospherical convexity, and barrier geometry; see, for instance,
\cite{bento2024new,sharmaa2025regularized,sharma2026bregman,
goodwin2026stochastic,criscitiello2025horospherically,
criscitiello2026negative,ferreira2026subdifferential}
and the references therein.

Against this background, the main contribution of the present paper is to
develop a new subdifferential framework and use it to address
problems identified in our initial investigations. We recall and contextualize
these problems below.
\medskip

\noindent\textbf{Problem 1.}
In \cite[Theorem~5]{bento2023fenchel}, we established the Fenchel-Young
inequality
\begin{equation}\label{Fechel-Young1001}
    f(x)+f_p^*(y)
    \geq
    d(p,x)B_{v_x}^p(y),
    \qquad
    v_x:=
    -\frac{\exp_p^{-1}x}{d(x,p)},
\end{equation}
for \(x\neq p\), with the coupling term on the right understood to be zero
when \(x=p\). The corresponding nonnegative quantity
\[
    \mathcal G_p(x,y)
    :=
    f(x)+f_p^*(y)-d(p,x)B_{v_x}^p(y)
    \geq0
\]
is the associated primal--dual gap and measures the compatibility between
the primal point \(x\) and the dual point \(y\). We conjectured that a
subdifferential framework based on Busemann functions could characterize the
conditions under which this gap vanishes:
\[
    f(x)+f_p^*(y)
    =
    d(p,x)B_{v_x}^p(y).
\]
When equality holds, \(x\) attains the supremum defining \(f_p^*(y)\), and
\(y\) provides an exact global support for \(f\) at \(x\).
This interpretation is particularly transparent when \(M=\mathbb{R}^n\),
because
\[
    d(p,x)B_{v_x}^p(y)
    =
    \langle x-p,y-p\rangle.
\]
In this case, equality in the Fenchel-Young inequality is equivalent to
\(y-p\in\partial f(x)\), that is,
\begin{equation}\label{acoplamento1}
    f(z)
    \geq
    f(x)+\langle y-p,z-x\rangle
    =
    f(x)+\langle y-p,z-p\rangle
    -\langle y-p,x-p\rangle,
    \qquad z\in\mathbb{R}^n.
\end{equation}
Thus, the first problem is to develop an intrinsic Busemann-based
subdifferential that characterizes equality in the Fenchel-Young inequality
and establishes an exact correspondence between the associated primal and
dual objects. 
Beyond this primal-dual characterization, we give an exact functional
description of everywhere nonemptiness. For finite-valued functions,
\[
\partial B^p f(x)\neq\varnothing
\quad\text{for every }x\in M,
\]
if and only if \(f\) is a pointwise-attained \(H_p\)-envelope. This class includes pointwise maxima of finitely many \(H_p\)-sections, the radial convex
functions treated here, and distance from the base point. Complementarily,
a rigidity theorem in hyperbolic space shows that, for functions
differentiable at \(p\),
\[
\partial B^p f(p)\neq\varnothing
\quad\Longleftrightarrow\quad
p\in\operatorname*{argmin}f,
\]
equivalently, the Fenchel-Young gap vanishes at the dual point \(y=p\).
Thus, nonemptiness is governed both by the global \(H_p\)-support
representation of the function and, at the distinguished base point,
by its variational compatibility with the coupling.

During the development of this research program, some notions of
subdifferential based on Busemann functions were proposed in the related
settings of Hadamard spaces and Hadamard manifolds; see
\cite{criscitiello2025horospherically,goodwin2024subgradient,
ferreira2026subdifferential}. After accounting for differences in sign
conventions, normalizations, and parametrizations, the Busemann
subdifferential of \cite{goodwin2024subgradient}, the horospherical
subdifferential of \cite{criscitiello2025horospherically}, and the
\(B\)-subdifferential discussed in
\cite[Remarks~2 and~3]{ferreira2026subdifferential} are equivalent at the
level of their underlying global support inequality: all three are generated
by a single scaled Busemann function associated with a fixed ideal direction.
Although their surrounding theories and applications differ, they therefore
impose the same type of support condition on a Hadamard manifold.

The \(B^p\)-subdifferential introduced here has a different structure. Its
support condition is generated by the base-point-dependent two-point
coupling
\begin{equation}\label{eq:111}
\begin{aligned}
    d(p,z)B^p_{-\frac{\exp_p^{-1}z}{d(p,z)}}(y)
    -
    d(p,x)B^p_{-\frac{\exp_p^{-1}x}{d(p,x)}}(y),
\end{aligned}
\end{equation}
whenever \(x,z\neq p\). Unlike the fixed-direction constructions,
\eqref{eq:111} is not generated by a single Busemann function of \(z\):
both the weight and the direction in its first term depend on \(z\), whereas
the Busemann function is evaluated at the dual point \(y\). In the Euclidean
setting, \eqref{eq:111} reduces to the classical affine support in
\eqref{acoplamento1}:
\[
    \langle y-p,z-p\rangle-\langle y-p,x-p\rangle
    =
    \langle y-p,z-x\rangle.
\]
Outside the flat setting, the subdifferential proposed here is genuinely distinct from the fixed-direction constructions discussed above. This
distinction also reflects their different purposes. The fixed-direction
notions provide global support mechanisms used in first-order,
horospherical, and projection--proximal methods. By contrast, our
\(B^p\)-subdifferential is defined for arbitrary proper functions and is
intrinsically linked to the Fenchel conjugate introduced in
\cite{bento2023fenchel}: it characterizes equality in the corresponding
Fenchel-Young inequality, yields a precise primal--dual correspondence, and
admits natural transformation rules under isometries. Thus, the distinction
between these frameworks arises from the geometry of their underlying
couplings and cannot be removed by changes of sign convention,
normalization, or parametrization. 

This structural comparison reveals a common geometric source that connects
the first problem with the biconjugation problem considered next. Indeed, if
\[
    y=\exp_p(-s\eta),
    \qquad s>0,\quad\|\eta\|=1,
\]
then a fixed-direction support can be written as
\(
    sB_\eta^p(z)=H_p(y,z),
\)
whereas the support defining our subdifferential involves
\[
    H_p(z,y)-H_p(x,y).
\]
Consequently, their structural difference is governed by the lack of
reciprocity
\[
    H_p(z,y)-H_p(y,z).
\]
The maximal magnitude of this asymmetry had already been introduced in \cite[Section~6]{bento2023fenchel} as an  intrinsic measure of nonlinearity, albeit only in an embryonic form and without a systematic analysis, namely,
\[
    -\mathcal N^p(y)
    :=
    \sup_{z\in M}
    \bigl\{H_p(z,y)-H_p(y,z)\bigr\}\geq0.
\]
Thus, \(-\mathcal N^p\) provides a common \emph{measure of nonlinearity}: it
both distinguishes our subdifferential from the fixed-direction Busemann
constructions and controls the deviation from exact biconjugation. In the
Euclidean case, \(H_p\) is symmetric, so this measure vanishes and both
distinctions disappear.

\medskip
\noindent\textbf{Problem 2.}
Let \(M\) be a Hadamard manifold with constant sectional curvature
\(-k^2\), where \(k>0\), let \(p\in M\), and let
\(\bar\eta\in T_pM\) be a unit vector. In
\cite[Theorem~3]{bento2023fenchel}, we established that
\begin{equation}\label{FM2023-16}
\bigl(B_{\bar\eta}^p\bigr)^{**}
=
B_{\bar\eta}^p
-
\frac{1}{k}
\log\left(\cosh\left(\frac{k}{2}\right)\right).
\end{equation}
Motivated by this identity, we conjectured in
\cite[Section~6]{bento2023fenchel} a Fenchel--Moreau-type estimate of the
form
\begin{equation}\label{FM1}
f_p^{**}\leq f\leq f_p^{**}+C(f,M),
\end{equation}
with a curvature-dependent error term.
This biconjugation problem should be distinguished from the
Fenchel--Moreau-type result of Goodwin et al.
\cite[Proposition~3.5]{goodwin2024subgradient}. In their framework, the
biconjugate is obtained by composing a conjugation from functions on the
Hadamard space to functions on its boundary cone with the corresponding
adjoint transform in the reverse direction. Thus, it is not obtained by
iterating a single conjugation operator, and the adjoint pair preserves
the primal--dual ordering of its coupling. By contrast, \(f_p^{**}\) is
defined by reapplying the same conjugation on functions over \(M\), which
interchanges the arguments of the generally nonsymmetric coupling
\(H_p\). Consequently, the two constructions address different
biconjugation problems; see
Remark~\ref{rem:Goodwin-adjoint-biconjugation}.


The logarithmic-hyperbolic expression in \eqref{FM2023-16} already
contained implicitly the scalar profile
\begin{equation}\label{eq:functionAux}
t\longmapsto\varphi(t):=\frac{\log(\cosh t)}{t}
\end{equation}
One of the contributions of the present paper is to identify this
function explicitly and reveal its
role as a quantitative profile of the nonreciprocity of \(H_p\).
Indeed, the correction term in \eqref{FM2023-16} can be written as

$$
    \frac{1}{k}\log\!\left(\cosh\!\left(\frac{k}{2}\right)\right)
    =
    \frac{1}{2}\varphi\!\left(\frac{k}{2}\right).
$$
More fundamentally, for orthogonal unit vectors
\(\eta,\theta\in T_pM\), \(r,s>0\), and

$$
    y=\exp_p(-s\eta),
    \qquad
    z=\exp_p(r\theta),
$$
we prove the exact identity

$$
    H_p(z,y)-H_p(y,z)
    =
    rs\bigl[\varphi(ks)-\varphi(kr)\bigr].
$$
Since \(\varphi\) is strictly increasing, this representation determines
the sign of the asymmetry solely from the relative radial positions of
\(y\) and \(z\), while its magnitude records the combined effects of
curvature and distance. Thus, \(\varphi\) is not merely a
reparametrization of the constant found in
\cite[Theorem~3]{bento2023fenchel}; it exposes the geometric mechanism
behind that constant and makes the obstruction to the conjectured
one-sided ordering in \eqref{FM1} transparent.
Indeed, the analysis in Section~\ref{Sec4} shows that the ordering in
\eqref{FM1} does not hold in general: because \(H_p\) is not symmetric,
the difference \(f-f_p^{**}\) may have either sign. The second problem
must therefore be reformulated as the search for an intrinsic two-sided
control of the biconjugation gap. We prove that, for every proper
function \(f\) and every \(x\in\operatorname{dom}f\),

$$
    \mathcal N^p(x)
    \leq
    f(x)-f_p^{**}(x),
$$
and, whenever \(v\in\partial B^p f(x)\) and \(y=Y_p(x,v)\),

$$
    \mathcal N^p(x)
    \leq
    f(x)-f_p^{**}(x)
    \leq
    H_p(x,y)-H_p(y,x)
    \leq
    -\mathcal N^p(y).
$$
Moreover, if $M$ has constant sectional curvature 
-$k^2$, $k > 0$, and \(d(y,p)=1\), one has
$$
    -\mathcal N^p(y)
    =
    C_k
    :=
    \frac{1}{k}
    \log\!\left(\cosh\!\left(\frac{k}{2}\right)\right)
    =
    \frac{1}{2}\varphi\!\left(\frac{k}{2}\right),
$$
and this constant is attained by biconjugation deviations of both signs.
Hence, \(-\mathcal N^p\) is precisely the geometric quantity that
controls the failure of exact biconjugation in constant negative
curvature, whereas \(\varphi\) provides the scalar profile governing the
underlying asymmetry.

The remainder of the paper is organized as follows. In
Sect.~\ref{sec:geometric-preliminaries}, we recall the geometric ingredients needed to analyze the base-point-dependent Busemann coupling. In Sect.~\ref{Sec3}, we
introduce the \(B^p\)-subdifferential and establish its main theoretical
connections with the Fenchel conjugate. In Sect.~\ref{Sec4}, we analyze
Fenchel biconjugation and the associated measure of nonlinearity. Finally,
Sect.~\ref{Sec6} concludes the paper with final remarks and directions for
future research.

\section{Busemann geometry and coupling asymmetry}
\label{sec:geometric-preliminaries}
This section recalls the geometric ingredients needed to analyze the
base-point-dependent Busemann coupling, denoted by \(H_p\). We first
recall basic homothetic scaling relations, the hyperbolic law of cosines,
and the curvature-dependent expression for Busemann functions established
in \cite{bento2023fenchel}. We then introduce an auxiliary scalar function
that yields an explicit radial description of the asymmetry of \(H_p\)
in constant negative sectional curvature. This formula will provide the
geometric basis for both the comparison of Busemann subdifferential
frameworks and the subsequent analysis of Fenchel biconjugation. Unlike
its Euclidean counterpart, \(H_p\) is generally not symmetric, and this
lack of reciprocity is a genuinely nonlinear feature of the underlying
geometry.

A complete and simply connected Riemannian manifold with nonpositive
sectional curvature is called a Hadamard manifold. 
We shall also use the behavior of the relevant geometric objects under
constant conformal rescalings. Let \(g\) denote the Riemannian metric of
\(M\), and, for \(\lambda>0\), consider the homothetic metric
\begin{equation}
\label{eq:homothetic-metric}
    g_{\lambda}:=\frac{1}{\lambda^2}g.
\end{equation}
We denote the resulting Riemannian manifold by \(M_{\lambda}\), and use
the subscript \(\lambda\) for its associated geometric objects. Since
the conformal factor is constant, the Levi--Civita connections and
exponential mappings associated with \(g\) and \(g_{\lambda}\) coincide.
Moreover,
\begin{equation}
\label{eq:homothetic-geometric-scaling}
    \|u\|_{\lambda}
    =
    \frac{1}{\lambda}\|u\|,
    \qquad
    d_{\lambda}(x,z)
    =
    \frac{1}{\lambda}d(x,z),
    \qquad
    K_{g_{\lambda}}(\Pi)
    =
    \lambda^2K_g(\Pi),
\end{equation}
for every \(u\in TM\), \(x,z\in M\), and every two-dimensional subspace
\(\Pi\subset T_xM\). Angles are unchanged by this rescaling. These facts
are recalled in \cite{bento2023fenchel}.
In particular, if \(M\) has constant sectional curvature \(-1\), then
\(M_k\) has constant sectional curvature \(-k^2\), and lengths in the
rescaled metric are divided by \(k\). 
An important model is the hyperbolic space of constant sectional curvature
\(-k^2\), where \(k>0\). The previous observation explains the
appearance of the curvature parameter \(k\) in the hyperbolic law of
cosines below. If \(a,b,c\) are the side lengths of a geodesic
triangle and \(\alpha\) is the angle opposite the side of length \(a\), then
the hyperbolic law of cosines takes the form
\begin{equation}
\label{eq:hyperbolic-law-cosines-prelim}
    \cosh(ka)
    =
    \cosh(kb)\cosh(kc)
    -
    \sinh(kb)\sinh(kc)\cos\alpha.
\end{equation}
In particular, if \(\alpha=\pi/2\), then
\begin{equation}
\label{eq:hyperbolic-law-cosines-orthogonal}
    \cosh(ka)
    =
    \cosh(kb)\cosh(kc).
\end{equation}
Although only the equality corresponding to constant sectional curvature is required in this paper, it is worth noting that the comparison versions of \eqref{eq:hyperbolic-law-cosines-prelim} for sectional curvature bounded from below or from above are also provided in \cite[Proposition~7]{bento2023fenchel}.

The following exact expression, established in
\cite[Lemma~1]{bento2023fenchel}, follows from the hyperbolic law of cosines and will be useful throughout this paper.
\begin{Lemma}[Busemann functions in constant negative curvature]
\label{lem:Busemann-constant-curvature}
Let \(M\) be a Hadamard manifold with constant sectional curvature
\(-k^2\), where \(k>0\). Let \(u,v\in T_pM\) be unit vectors and set
\(
    x=\exp_p(-tv),\)
    \(t=d(p,x)>0.
\)
If \(\alpha\) denotes the angle between \(-v\) and \(u\), then
\begin{equation*}
\label{eq:Busemann-constant-curvature}
    B_u^p(x)
    =
    \frac{1}{k}
    \log\bigl(
        \cosh(kt)-\sinh(kt)\cos\alpha
    \bigr).
\end{equation*}
\end{Lemma}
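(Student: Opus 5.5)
The plan is to compute the limit defining $B_u^p(x)$ directly, using the hyperbolic law of cosines \eqref{eq:hyperbolic-law-cosines-prelim}.

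Let $\gamma(s)=\exp_p(su)$, $s\ge0$, be the unit-speed ray along $u$. Fix $s>0$ and consider the geodesic triangle with vertices $p$, $x$, $\gamma(s)$. Its sides adjacent to $p$ have lengths $t=d(p,x)$ and $s=d(p,\gamma(s))$. The angle at $p$ between them is the angle between $\exp_p^{-1}x=-tv$ and $u$, which is $\alpha$ by hypothesis. Applying \eqref{eq:hyperbolic-law-cosines-prelim} with $a=d(x,\gamma(s))$, $b=t$, $c=s$ gives
\[
\cosh\bigl(k\,d(x,\gamma(s))\bigr)=\cosh(kt)\cosh(ks)-\sinh(kt)\sinh(ks)\cos\alpha .
\]
This identity holds in constant curvature $-k^2$ on any Hadamard manifold of that curvature, including the degenerate cases $\alpha\in\{0,\pi\}$. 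In those cases the formula reduces to $\cosh(k|t\mp s|)$ and is trivially valid.

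Next, set $D_s:=d(x,\gamma(s))$. Since $D_s\to\infty$, we have $\cosh(kD_s)=\tfrac12 e^{kD_s}(1+o(1))$. On the right-hand side, $\cosh(ks)$ and $\sinh(ks)$ are both $\tfrac12 e^{ks}(1+o(1))$. Dividing by $\tfrac12 e^{ks}$ yields
\[
e^{k(D_s-s)}\bigl(1+o(1)\bigr)=\cosh(kt)-\sinh(kt)\cos\alpha+o(1).
\]
The limit $\cosh(kt)-\sinh(kt)\cos\alpha\ge e^{-kt}>0$ is strictly positive. Taking logarithms and dividing by $k$ therefore gives
\[
D_s-s\to \tfrac1k\log\bigl(\cosh(kt)-\sinh(kt)\cos\alpha\bigr),
\]
which is exactly $B_u^p(x)$.

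The only delicate points are identifying the angle at $p$ correctly and justifying that passage to the limit. For the angle, note that $\exp_p^{-1}x=-tv$ because $t=d(p,x)$ and the geodesic $r\mapsto \exp_p(-rv)$ is minimizing on a Hadamard manifold. For the limit, the positivity of the limiting expression is what allows the logarithm to be taken, and for this I would write $\cosh(kD_s)=\tfrac12 e^{kD_s}(1+e^{-2kD_s})$ explicitly.
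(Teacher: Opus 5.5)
Your proof is correct and follows the route the paper indicates. The paper does not reprove the lemma; it cites \cite[Lemma~1]{bento2023fenchel} and notes that the formula follows from the hyperbolic law of cosines. That is exactly your argument: apply \eqref{eq:hyperbolic-law-cosines-prelim} to the triangle with vertices $p$, $x=\exp_p(-tv)$, $\gamma(s)=\exp_p(su)$, whose angle at $p$ is $\alpha$, then let $s\to\infty$. Two steps could be stated explicitly: $D_s\to\infty$ follows from $D_s\ge s-t$, and the bound $\cosh(kt)-\sinh(kt)\cos\alpha\ge e^{-kt}>0$ is what justifies taking logarithms in the limit.
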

For any $q\in M$ and any
nonzero vector $u\in T_qM$, we denote by $B_u^q$ the Busemann function
associated with the unit-speed geodesic ray issued from $q$ in the direction
$u/\|u\|$, i.e., $B_u^q=B_{u/\|u\|}^q$. 
Busemann functions also satisfy a simple scaling rule under the
homothetic change \eqref{eq:homothetic-metric}. More precisely, let
\(v\in T_pM\) be a unit vector with respect to \(g\), and set
\(\widetilde v:=\lambda v\), which is a unit vector with respect to
\(g_{\lambda}\). If \(B_{\widetilde v}^{p,\lambda}\) denotes the
corresponding Busemann function on \(M_{\lambda}\), then
\cite[Proposition~8]{bento2023fenchel} gives
\begin{equation}
\label{eq:Busemann-homothetic-scaling}
    B_{\widetilde v}^{p,\lambda}(x)
    =
    \frac{1}{\lambda}B_v^p(x),
    \qquad x\in M.
\end{equation}
Given a fixed base point
$p\in M$, define the Busemann coupling $H_p:M\times M\to\mathbb{R}$ by
\begin{equation}\label{eq:Busemann-coupling}
H_p(z,y):=
\begin{cases}
\displaystyle
d(p,z)B^p_{-\exp_p^{-1}z}(y), & z\neq p,\\[2mm]
0, & z=p.
\end{cases}
\end{equation}
Equivalently, for $z\neq p$,
\[
    H_p(z,y)
    =d(p,z)B^p_{-\frac{\exp_p^{-1}z}{d(p,z)}}(y).
\]
In the Euclidean setting, one has
\(
    H_p(z,y)=\langle z-p,y-p\rangle.
\)
Let \(H_p^{\lambda}\) denote the Busemann coupling associated with the
rescaled manifold \(M_{\lambda}\). Combining
\eqref{eq:homothetic-geometric-scaling} and
\eqref{eq:Busemann-homothetic-scaling}, we obtain
\begin{equation}
\label{eq:Busemann-coupling-homothetic-scaling}
    H_p^{\lambda}(z,y)
    =
    \frac{1}{\lambda^2}H_p(z,y),
    \qquad z,y\in M.
\end{equation}
This scaling identity will later allow us to transfer the unit-radius
formula for the intrinsic nonlinearity measure to points at arbitrary
distance from the base point.

Unlike the Euclidean bilinear pairing, \(H_p\) is generally not symmetric
on a nonflat Hadamard manifold. To describe its radial asymmetry in
constant negative curvature, consider the function
\(
\varphi(t):=\log(\cosh t)/t
\) introduced in \eqref{eq:functionAux} 
which is strictly increasing. 

We can now express the asymmetry of \(H_p\) along orthogonal radial
directions directly in terms of \(\varphi(\cdot)\).

\begin{Proposition}[Radial asymmetry of the Busemann coupling]
\label{prop:Hp-asymmetry-phi}
Let \(M\) be a Hadamard manifold with constant sectional curvature
\(-k^2\), where \(k>0\), and let \(p\in M\). Given orthogonal unit vectors
\(\eta,\theta\in T_pM\) and \(r,s>0\), define
\(
    y:=\exp_p(-s\eta),
    \)
    \(z:=\exp_p(r\theta).
\)
Then,
\begin{equation}
\label{eq:Hp-orthogonal-values}
    H_p(y,z)
    =
    \frac{s}{k}\log\bigl(\cosh(kr)\bigr),\qquad H_p(z,y)
    =
    \frac{r}{k}\log\bigl(\cosh(ks)\bigr)
\end{equation}
Consequently,
\begin{equation}
\label{eq:Hp-asymmetry-phi}
    H_p(z,y)-H_p(y,z)
    =
    rs\bigl[\varphi(ks)-\varphi(kr)\bigr].
\end{equation}
In particular, for \( 0<r<s\) one has \(H_p(z,y)>H_p(y,z).
\)
\end{Proposition}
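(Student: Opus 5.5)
Both values in \eqref{eq:Hp-orthogonal-values} follow from the definition \eqref{eq:Busemann-coupling} combined with Lemma~\ref{lem:Busemann-constant-curvature}. The only care needed is in identifying the unit vector $u$ generating the Busemann function, the vector $v$ with $x=\exp_p(-tv)$, and the angle $\alpha$ between $-v$ and $u$.

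\emph{The value $H_p(y,z)$.} Here $y=\exp_p(-s\eta)$, so $\exp_p^{-1}y=-s\eta$ and $d(p,y)=s$. The definition gives
\[
H_p(y,z)=s\,B^p_{\eta}(z),
\]
since $-\exp_p^{-1}y/d(p,y)=\eta$. Next, write $z=\exp_p(r\theta)=\exp_p(-r(-\theta))$, so in the lemma $t=r$, $v=-\theta$ and $u=\eta$. Then $-v=\theta$, and the angle between $\theta$ and $\eta$ is $\pi/2$, so $\cos\alpha=0$. The lemma yields $B^p_\eta(z)=\frac1k\log(\cosh(kr))$, and hence
\[
H_p(y,z)=\frac{s}{k}\log(\cosh(kr)).
\]

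\emph{The value $H_p(z,y)$.} Symmetrically, $d(p,z)=r$ and $-\exp_p^{-1}z/d(p,z)=-\theta$, so
\[
H_p(z,y)=r\,B^p_{-\theta}(y).
\]
Now $y=\exp_p(-s\eta)$, so $t=s$, $v=\eta$ and $u=-\theta$. The angle between $-\eta$ and $-\theta$ is again $\pi/2$, by orthogonality. The lemma therefore gives $B^p_{-\theta}(y)=\frac1k\log(\cosh(ks))$, which proves the second formula.

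\emph{The asymmetry formula.} Subtracting the two values,
\[
H_p(z,y)-H_p(y,z)=\frac{r}{k}\log\cosh(ks)-\frac{s}{k}\log\cosh(kr)=rs\left[\frac{\log\cosh(ks)}{ks}-\frac{\log\cosh(kr)}{kr}\right],
\]
and the bracket is exactly $\varphi(ks)-\varphi(kr)$.

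\emph{The strict inequality.} For $0<r<s$ we have $kr<ks$. Since $\varphi$ is strictly increasing on $(0,\infty)$ and $rs>0$, the difference is positive, so $H_p(z,y)>H_p(y,z)$.

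The monotonicity of $\varphi$ is asserted in the text rather than proved, so I would check it if required. One has
\[
t^2\varphi'(t)=t\tanh t-\log\cosh t=:g(t).
\]
Here $g(0)=0$ and $g'(t)=t\,\mathrm{sech}^2 t>0$ for $t>0$, so $g>0$ and hence $\varphi'>0$ on $(0,\infty)$.

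The only delicate point is the sign and angle bookkeeping in the lemma's convention, and orthogonality makes it harmless.
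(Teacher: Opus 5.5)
Your proof is correct and follows the paper's own argument: both values come from the definition of $H_p$ combined with the constant-curvature Busemann lemma, with orthogonality giving $\cos\alpha=0$, and the asymmetry identity and strict inequality then follow by subtraction and the monotonicity of $\varphi$. Your identification of $u$, $v$ and $\alpha$ in each application of the lemma is right. Your check that $t^2\varphi'(t)=t\tanh t-\log\cosh t>0$ supplies a proof of the monotonicity of $\varphi$, which the paper only asserts.
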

\begin{proof}
The identities in \eqref{eq:Hp-orthogonal-values} follow directly from
Lemma~\ref{lem:Busemann-constant-curvature}, applied to the definitions of
\(y\) and \(z\), together with the orthogonality of the unit vectors
\(\eta\) and \(\theta\). Subtracting the identities in
\eqref{eq:Hp-orthogonal-values} and using the definition of
\(\varphi\) in \eqref{eq:functionAux} yields
\eqref{eq:Hp-asymmetry-phi}. The final assertion follows immediately from
the strict monotonicity of \(\varphi(\cdot)\).
\end{proof}
\begin{Remark}
Identity \eqref{eq:Hp-asymmetry-phi} shows that the lack of reciprocity of
\(H_p\) is already visible in the two-dimensional geometry generated by
the orthogonal directions \(\eta\) and \(\theta\). Moreover, \(\varphi(\cdot)\)
measures the radial variation of the same asymmetry whose global magnitude
will be quantified by the intrinsic nonlinearity measure.
\end{Remark}
In the remainder of this paper, unless otherwise stated, M will denote an n-dimensional Hadamard manifold.

\section{Subdifferential via Busemann functions on Hadamard manifolds}
\label{Sec3}

In this section, we introduce a subdifferential framework based on the
Busemann coupling underlying the Fenchel conjugate proposed in
\cite{bento2023fenchel}. We first establish the exact correspondence between our
$B^{p}$-subdifferential and equality in the Fenchel-Young inequality,
investigate its behavior under isometries, and determine it explicitly for a
class of radial functions.  We then, compare our construction with the
fixed-direction Busemann subdifferentials previously considered in the
literature.

Given $x\in M$ and $v\in T_xM\setminus\{0\}$, let
$\eta_{p,x}(v)\in T_pM$ be the unique unit vector such that the unit-speed
geodesic rays
\[
    \gamma_{(p,\eta_{p,x}(v))}
    \quad\text{and}\quad
    \gamma_{\left(x,-v/\|v\|\right)}
\]
are asymptotic. Define
\begin{equation}\label{eq:Ypxv}
Y_p(x,v):=
\begin{cases}
\displaystyle
\exp_p\bigl(-\|v\|\eta_{p,x}(v)\bigr), & v\neq0,\\[1mm]
p, & v=0.
\end{cases}
\end{equation}

Inspired by the representation subgradient in \eqref{acoplamento1}, we propose the following definition of subdifferential via Busemann functions.
\begin{definition}\label{def:Bp-subdifferential}
Let $f:M\to\mathbb{R}\cup\{+\infty\}$ be proper and let $x\in\emph{dom} f$.
A vector $v\in T_xM$ is called a $B^p$-subgradient of $f$ at $x$ if
\begin{equation}\label{eq:Bp-support}
    f(z)\geq f(x)
    +H_p\bigl(z,Y_p(x,v)\bigr)
    -H_p\bigl(x,Y_p(x,v)\bigr),
    \qquad z\in M.
\end{equation}
The set of all $B^p$-subgradients of $f$ at $x$ is called the
$B^p$-subdifferential of $f$ at $x$ and is denoted by
\(
    \partial B^p f(x).
\)
\end{definition}
In the Euclidean setting, $\eta_{p,x}(v)=-v/\|v\|$ and hence
\[
    Y_p(x,v)=p+v.
\]
Combining this identity with \eqref{acoplamento1}, inequality
\eqref{eq:Bp-support} becomes
\[
\begin{aligned}
    f(z)
    &\geq f(x)
      +\langle y-p,z-p\rangle
      -\langle y-p,x-p\rangle\\
    &=f(x)+\langle y-p,z-x\rangle
     =f(x)+\langle v,z-x\rangle,
\end{aligned}
\]
where $y=p+v$. Thus, Definition~\ref{def:Bp-subdifferential} reduces to the
classical subdifferential in Euclidean spaces.

\subsection{Fenchel-Young equality and the $B^p$-subdifferential}

Next we introduce the inverse correspondence between a dual point and its associated tangent vector. For $y\neq p$, define
\[
    \eta_p(y):=-\frac{\exp_p^{-1}y}{d(p,y)},
\]
and let $u_{x,y}\in T_xM$ be the unique unit vector such that
\(  \gamma_{(p,\eta_p(y))}\)  and   \( \gamma_{(x,-u_{x,y})}
\)
are asymptotic. For each fixed $x\in M$, set
\begin{equation}\label{eq:Vpxy}
y\longmapsto V_p(x,y):=
\begin{cases}
d(p,y)u_{x,y}, & y\neq p,\\
0, & y=p.
\end{cases}
\end{equation}
To formulate Fenchel--Young equality as an intrinsic condition, we first relate dual points in \(M\) to tangent vectors at a fixed point \(x\). The following lemma shows that the mappings \(Y_p(x,\cdot)\) and \(V_p(x,\cdot)\) are mutually inverse and that each coupling \(H_p(\cdot,y)\) admits \(V_p(x,y)\) as its exact \(B^p\)-subgradient at \(x\). This identification provides the geometric bridge used in the central theorem below to characterize attainment of the conjugate supremum by a uniquely determined tangent-space subdifferential condition.
\begin{Lemma}\label{lem:Y-V-inverse-H-support}
Let $p\in M$, and let the mappings $Y_p$ and $V_p$ be defined by
\eqref{eq:Ypxv} and \eqref{eq:Vpxy}, respectively. For every fixed
$x\in M$, the mappings
\[
Y_p(x,\cdot):T_xM\longrightarrow M
    \qquad\text{and}\qquad
    V_p(x,\cdot):M\longrightarrow T_xM
\]
are inverse to each other. More precisely,
\begin{equation*}\label{Inverse1}   V_p\bigl(x,Y_p(x,v)\bigr)=v,
    \qquad v\in T_xM,
\end{equation*}
and
\begin{equation*}\label{Inverse2}    Y_p\bigl(x,V_p(x,y)\bigr)=y,
    \qquad y\in M.
\end{equation*}
Furthermore, for each fixed $y\in M$, the function
\[
    H_y:M\longrightarrow\mathbb{R},
    \qquad
    H_y(z):=H_p(z,y),
\]
satisfies
\begin{equation}\label{eq:Sub101}
    V_p(x,y)\in\partial B^p H_y(x),
    \qquad x\in M.
\end{equation}
In fact, the support inequality defining the $B^p$-subdifferential
holds as an equality at every point of $M$.
\end{Lemma}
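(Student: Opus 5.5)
The plan is to reduce everything to the standard fact on a Hadamard manifold $M$: for each $q\in M$ and each point $\xi$ of the ideal boundary $M(\infty)$, there is a unique unit vector $w\in T_qM$ whose geodesic ray $\gamma_{(q,w)}$ lies in the class $\xi$. Since asymptoticity is an equivalence relation, this gives, for fixed $p$ and $x$, a bijection $\Phi_{x}:S_pM\to S_xM$ defined by $\gamma_{(p,\eta)}\sim\gamma_{(x,\Phi_x(\eta))}$. In terms of $\Phi_x$, the definitions read
\[
\eta_{p,x}(v)=\Phi_x^{-1}\bigl(-v/\|v\|\bigr),
\qquad
u_{x,y}=-\Phi_x\bigl(\eta_p(y)\bigr).
\]

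\textbf{Inverse relations.} Let $v\neq0$ and put $y=Y_p(x,v)=\exp_p(-\|v\|\eta_{p,x}(v))$. Then $d(p,y)=\|v\|$ and $\eta_p(y)=-\exp_p^{-1}y/d(p,y)=\eta_{p,x}(v)$. Hence
\[
u_{x,y}=-\Phi_x\bigl(\eta_{p,x}(v)\bigr)=v/\|v\|,
\qquad\text{so}\qquad
V_p(x,y)=\|v\|\,v/\|v\|=v.
\]
Conversely, let $y\neq p$ and put $v=V_p(x,y)=d(p,y)u_{x,y}$. Then $\|v\|=d(p,y)$ and $-v/\|v\|=-u_{x,y}=\Phi_x(\eta_p(y))$, so $\eta_{p,x}(v)=\eta_p(y)$. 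Therefore
\[
Y_p(x,v)=\exp_p\bigl(d(p,y)\exp_p^{-1}y/d(p,y)\bigr)=y.
\]
The zero cases $v=0\leftrightarrow y=p$ match by definition. Note also that $v\neq0$ forces $y\neq p$, and vice versa, so the two cases never mix.

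\textbf{Support identity.} Fix $y$ and set $v=V_p(x,y)$. By the first part, $Y_p(x,v)=y$. The defining inequality \eqref{eq:Bp-support} for $f=H_y$ at $x$ then reads
\[
H_p(z,y)\ge H_p(x,y)+H_p(z,y)-H_p(x,y).
\]
This is an identity, valid for all $z\in M$. Hence \eqref{eq:Sub101} holds, with equality everywhere. One only needs $x\in\operatorname{dom}H_y$, which is automatic since $H_p$ is finite-valued; Busemann functions are real-valued.

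\textbf{Main obstacle.} The only nontrivial ingredient is well-definedness, i.e.\ that $\Phi_x$ is a bijection. For existence, take the limit of the unit initial vectors at $x$ of the geodesic segments from $x$ to $\gamma_{(p,\eta)}(t)$ as $t\to\infty$; the ray with this limiting initial vector is asymptotic to $\gamma_{(p,\eta)}$ by convexity of the distance function. For uniqueness, two asymptotic rays from the same point $x$ have bounded convex distance vanishing at $0$, so it is identically zero. Both facts are classical (see \cite{bridson2013metric,ballmann2013manifolds}), and they are precisely what makes $\eta_{p,x}$ and $u_{x,y}$ well defined in the first place, so I would simply cite them. The remaining task is careful sign bookkeeping between $\eta_p(y)$ and $\exp_p^{-1}y$.
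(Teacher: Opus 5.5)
Your proof is correct and follows essentially the same route as the paper's: both show $\eta_p\bigl(Y_p(x,v)\bigr)=\eta_{p,x}(v)$ and $\eta_{p,x}\bigl(V_p(x,y)\bigr)=\eta_p(y)$, invoke uniqueness of the asymptotic direction, and then reduce the support inequality to a trivial identity via $Y_p\bigl(x,V_p(x,y)\bigr)=y$. Your bijection $\Phi_x$ simply packages the two uniqueness statements, which the paper treats as built into the definitions of $\eta_{p,x}$ and $u_{x,y}$, into a single map, and your sign bookkeeping is correct.
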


\begin{proof}
We first prove that
\[
    V_p\bigl(x,Y_p(x,v)\bigr)=v.
\]
If $v=0$, then the definitions immediately give
\[
    Y_p(x,0)=p
    \qquad\text{and}\qquad
    V_p(x,p)=0,
\]
so the identity follows. Suppose now that $v\neq0$, and set
\[
    \eta:=\eta_{p,x}(v),
    \qquad
    y:=Y_p(x,v)
      =\exp_p\bigl(-\|v\|\eta\bigr).
\]
Since the exponential map at $p$ is a global diffeomorphism, we have
\[
    \exp_p^{-1}y=-\|v\|\eta
    \qquad\text{and}\qquad
    d(p,y)=\|v\|.
\]
Consequently,
\[
    \eta_p(y)
    =-\frac{\exp_p^{-1}y}{d(p,y)}
    =\eta.
\]
By the definition of $\eta_{p,x}(v)$, the rays
\[
    \gamma_{(p,\eta)}
    \qquad\text{and}\qquad
    \gamma_{\left(x,-v/\|v\|\right)}
\]
are asymptotic. On the other hand, $u_{x,y}$ is the unique unit vector
in $T_xM$ for which
\[
    \gamma_{(p,\eta_p(y))}
    \qquad\text{and}\qquad
    \gamma_{(x,-u_{x,y})}
\]
are asymptotic. Since $\eta_p(y)=\eta$, due to the uniqueness of the asymptotic direction from \(x\), it follows that
\(
    u_{x,y}=v/\|v\|, 
\) and, hence,
\[
\begin{aligned}
    V_p\bigl(x,Y_p(x,v)\bigr)
    =V_p(x,y)
    =d(p,y)u_{x,y}
    =\|v\|\frac{v}{\|v\|}
    =v.
\end{aligned}
\]
For proving the converse identity
\(
Y_p\bigl(x,V_p(x,y)\bigr)=y,
\)
first note that the case $y=p$ follows directly from
\(
    V_p(x,p)=0\)
and
    \(Y_p(x,0)=p.
\)
Assume, therefore, that $y\neq p$, and set
\[
    \rho:=d(p,y),\qquad
    \eta:=\eta_p(y)
       =-\frac{\exp_p^{-1}y}{\rho},
    \qquad
    u:=u_{x,y}.
\]
By definition,
\(
    V_p(x,y)=\rho u.
\)
Moreover, the ray $\gamma_{(p,\eta)}$ is asymptotic to
$\gamma_{(x,-u)}$. Since
\[
    -\frac{V_p(x,y)}{\|V_p(x,y)\|}
    =-\frac{\rho u}{\rho}
    =-u,
\]
the uniqueness in the definition of
$\eta_{p,x}\bigl(V_p(x,y)\bigr)$ implies that
\[
    \eta_{p,x}\bigl(V_p(x,y)\bigr)=\eta.
\]
Therefore,
\[
\begin{aligned}
    Y_p\bigl(x,V_p(x,y)\bigr)
    =
    \exp_p\left(
        -\|V_p(x,y)\|
        \eta_{p,x}\bigl(V_p(x,y)\bigr)
    \right)
    =\exp_p(-\rho\eta)
    =\exp_p\bigl(\exp_p^{-1}y\bigr)
    =y,
\end{aligned}
\]
which concludes the proof that $Y_p(x,\cdot)$ and $V_p(x,\cdot)$ are mutually
inverse.

Finally, fix $y\in M$ and let $H_y(z):=H_p(z,y)$. From the inverse
identity just proved, we have
\[
    Y_p\bigl(x,V_p(x,y)\bigr)=y.
\]
Hence, for every $z\in M$,
\[
\begin{aligned}
&H_y(x)
 +H_p\bigl(z,Y_p(x,V_p(x,y))\bigr)
 -H_p\bigl(x,Y_p(x,V_p(x,y))\bigr)\\
&\qquad
=H_p(x,y)+H_p(z,y)-H_p(x,y)\\
&\qquad
=H_p(z,y)
=H_y(z).
\end{aligned}
\]
Thus, the support inequality in
Definition~\ref{def:Bp-subdifferential} holds as an equality for every
$z\in M$, and consequently
\[
    V_p(x,y)\in\partial B^p H_y(x).
\]
When $y=p$, this conclusion remains valid because
$V_p(x,p)=0$ and $H_p(z,p)=0$ for every $z\in M$.
\end{proof}

\begin{Remark}\label{Rem-1001}\;
\begin{itemize}
\item [a.] Note that the argument used in the proof of \eqref{eq:Sub101} does not require \(H_p\) to be symmetric; it depends only on the orientation \(H_p(\cdot,y)\) used in the definition of the \(B^p\)-subdifferential;

\item [b.]Note that in the Euclidean setting, $H_y(z)=\langle z-p,y-p\rangle$ and
\(
    \partial B^p H_y(x)=\{y-p\};
\)
\item [c.] In terms of the Busemann coupling $H_p$, the Fenchel conjugate
and the Fenchel-Young inequality introduced in
\cite{bento2023fenchel} and recalled in
\eqref{conjugateRiemannian1} and \eqref{Fechel-Young1001},
respectively, admit the equivalent formulations
\begin{equation}\label{eq:Fenchel-H}
    f_p^*(y)
    =
    \sup_{z\in M}\bigl\{H_p(z,y)-f(z)\bigr\},
    \qquad y\in M,
\end{equation}
and
\begin{equation*}\label{eq:Fenchel-Young-H}
    f(x)+f_p^*(y)\geq H_p(x,y),
    \qquad x,y\in M.
\end{equation*}
\end{itemize}
\end{Remark}
\medskip

The following theorem provides an intrinsic tangent-space realization of equality in the Fenchel-Young inequality. Beyond the formal
equivalence between attainment of the conjugate supremum and the global support inequality, the theorem gives each zero-gap dual point a unique
geometrically meaningful tangent representation. This correspondence
yields the Fermat rule, underlies the covariance formula under
isometries and the radial calculus.
\begin{Theorem}[Fenchel-Young equality and Fermat rule]
\label{thm:FY-Busemann-characterization}
Let  
\(f:M\to\mathbb{R}\cup\{+\infty\}\) be proper, \(p\in M\) and
\(x\in\operatorname{dom}f\). Then, for every \(y\in M\),
\begin{equation}
\label{eq:FY-Busemann-characterization}
f(x)+f_p^*(y)=H_p(x,y)
\quad\Longleftrightarrow\quad
V_p(x,y)\in\partial B^p f(x).
\end{equation}
In particular, the choice \(y=p\) yields the Fermat rule
\begin{equation}
\label{eq:Fermat-rule-Bp}
x\in\operatorname*{argmin}_{z\in M}f(z)
\quad\Longleftrightarrow\quad
0_x\in\partial B^p f(x)
\quad\Longleftrightarrow\quad
f(x)+f_p^*(p)=H_p(x,p)=0,
\end{equation}
where \(0_x\) denotes the zero vector in \(T_xM\).
\end{Theorem}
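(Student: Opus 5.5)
The plan is to read both sides of \eqref{eq:FY-Busemann-characterization} as the same global inequality, using the inverse identity \(Y_p(x,V_p(x,y))=y\) from Lemma~\ref{lem:Y-V-inverse-H-support}. Fix \(x\in\operatorname{dom}f\) and \(y\in M\), and put \(v:=V_p(x,y)\). By the lemma, \(Y_p(x,v)=y\). Definition~\ref{def:Bp-subdifferential} then says that \(v\in\partial B^p f(x)\) is exactly
\[
f(z)\geq f(x)+H_p(z,y)-H_p(x,y),\qquad z\in M .
\]
Rearranging, this is
\[
H_p(z,y)-f(z)\leq H_p(x,y)-f(x)\qquad\text{for all } z\in M .
\]
Rewriting with \(y\) in place of \(Y_p(x,v)\) is the only place where geometry enters, and it is already handled by the lemma.

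Next I use the formulation \eqref{eq:Fenchel-H} of the conjugate from Remark~\ref{Rem-1001}(c). The rearranged inequality holds for every \(z\) if and only if
\[
f_p^*(y)=\sup_{z}\{H_p(z,y)-f(z)\}\leq H_p(x,y)-f(x).
\]
Since \(f(x)\) is finite, this is \(f(x)+f_p^*(y)\leq H_p(x,y)\). The reverse inequality is the Fenchel--Young inequality of Remark~\ref{Rem-1001}(c), or simply the choice \(z=x\) in the supremum. Hence the support condition is equivalent to \(f(x)+f_p^*(y)=H_p(x,y)\), which proves \eqref{eq:FY-Busemann-characterization}. Two points need care in the extended-real arithmetic. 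First, \(f_p^*(y)>-\infty\) always holds, because the term \(z=x\) is finite. Second, the terms with \(f(z)=+\infty\) contribute \(-\infty\), so they do not affect the supremum, and on the support side they satisfy the inequality trivially.

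For the Fermat rule I take \(y=p\). From \eqref{eq:Vpxy}, \(V_p(x,p)=0_x\), and from \eqref{eq:Busemann-coupling}, \(H_p(z,p)=0\) for every \(z\); for \(z\neq p\) this is \(d(p,z)B^p_u(p)=0\), since Busemann functions based at \(p\) vanish at \(p\). The equivalence just proved then gives \(0_x\in\partial B^p f(x)\) if and only if \(f(x)+f_p^*(p)=H_p(x,p)=0\). Writing out the support inequality with \(y=p\), all coupling terms vanish and it becomes \(f(z)\geq f(x)\) for all \(z\), that is, \(x\in\operatorname{argmin}f\). This gives the chain \eqref{eq:Fermat-rule-Bp}.

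I expect no serious obstacle. The delicate step is identifying \(Y_p(x,V_p(x,y))\) with \(y\), which Lemma~\ref{lem:Y-V-inverse-H-support} supplies, including the degenerate cases \(y=p\) and \(x=p\) (where \(H_p(p,\cdot)=0\)). The remaining care is the \(\pm\infty\) bookkeeping noted above.
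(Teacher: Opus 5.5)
Your proposal is correct and follows the paper's proof essentially step for step. Both arguments reduce Fenchel--Young equality to attainment of the supremum at $z=x$, i.e.\ to $f(z)\ge f(x)+H_p(z,y)-H_p(x,y)$ for all $z\in M$, and then use $Y_p(x,V_p(x,y))=y$ from Lemma~\ref{lem:Y-V-inverse-H-support} to identify this with the support condition for $V_p(x,y)$. The differences are cosmetic: for the Fermat rule you read the support inequality with $y=p$ directly as $f(z)\ge f(x)$ while the paper computes $f_p^*(p)=-\inf_{z}f(z)$, and you spell out the extended-real bookkeeping that the paper leaves implicit.
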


\begin{proof}
By \eqref{eq:Fenchel-H}, equality in the Fenchel--Young inequality is
equivalent to

$$
\begin{aligned}
    f(x)+f_p^*(y)=H_p(x,y)
    \quad\Longleftrightarrow\quad
    H_p(x,y)-f(x)
    =
    \sup_{z\in M}\bigl\{H_p(z,y)-f(z)\bigr\}.
\end{aligned}
$$
Since the term corresponding to \(z=x\) holds in the supremum, the
last equality holds if and only if
$$
    H_p(x,y)-f(x)
    \geq
    H_p(z,y)-f(z),
    \qquad z\in M.
$$
Equivalently,
$$
    f(z)
    \geq
    f(x)+H_p(z,y)-H_p(x,y),
    \qquad z\in M.
$$
By Lemma~\ref{lem:Y-V-inverse-H-support},
$$
    Y_p\bigl(x,V_p(x,y)\bigr)=y.
$$
Consequently, the preceding inequality is precisely the support
inequality in Definition~\ref{def:Bp-subdifferential} for the vector
\(V_p(x,y)\), which leads to the conclusion of \eqref{eq:FY-Busemann-characterization}.

It remains to establish the particular characterization
\eqref{eq:Fermat-rule-Bp}. To this end, first note that from \eqref{eq:Busemann-coupling}, we have $
    H_p(z,p)=0,$
    $z\in M$ and, hence,
$$
    f_p^*(p)
    =
    \sup_{z\in M}\bigl\{H_p(z,p)-f(z)\bigr\}
    =
    -\inf_{z\in M}f(z).
$$
Moreover, \eqref{eq:Vpxy} gives \(V_p(x,p)=0_x\). Applying
\eqref{eq:FY-Busemann-characterization} with \(y=p\), we obtain
$$
\begin{aligned}
    0_x\in\partial B^p f(x)
    \quad\Longleftrightarrow\quad
    f(x)+f_p^*(p)=H_p(x,p)
    \quad\Longleftrightarrow\quad
    f(x)-\inf_{z\in M}f(z)=0.
\end{aligned}
$$
Since \(x\in\operatorname{dom}f\), the last equality holds if and only if
\(x\in\operatorname*{argmin}_{z\in M}f(z)\), which completes the proof.
\end{proof}

\begin{Remark}\label{Rem-1001}
The preceding theorem does not require the coupling \(H_p\) to be
symmetric, because its proof never interchanges the two arguments of
\(H_p\). The key point is the identity
$$
    Y_p\bigl(x,V_p(x,y)\bigr)=y,
$$
which gives
$$
\begin{aligned}
&H_p\bigl(z,Y_p(x,V_p(x,y))\bigr)
-H_p\bigl(x,Y_p(x,V_p(x,y))\bigr)
=H_p(z,y)-H_p(x,y).
\end{aligned}
$$
Thus, the support increment has precisely the same orientation
\(H_p(\cdot,y)\) as both the section \(H_y=H_p(\cdot,y)\) and the
conjugate
$$
    f_p^*(y)
    =
    \sup_{z\in M}\bigl\{H_p(z,y)-f(z)\bigr\}.
$$
This agreement of orientations allows
Theorem~\ref{thm:FY-Busemann-characterization} to characterize the vanishing of the Fenchel-Young gap in terms of membership in the \(B^p\)-subdifferential.
By contrast, at the base point, a fixed-direction Busemann support is
represented by \(H_p(y,z)\), with the arguments reversed. Since
\(H_p(z,y)\) and \(H_p(y,z)\) need not coincide, the corresponding
support conditions are generally distinct. Indeed,
Example~\ref{ex:comparison-hyperbolic} in the next section provides a vector
\(v\in\partial^b f(0)\) such that \(v\notin\partial B^0 f(0)\).
Therefore, for \(y=Y_0(0,v)\),
Theorem~\ref{thm:FY-Busemann-characterization} implies that the
corresponding Fenchel-Young gap does not vanish, even though \(v\) is a fixed-direction Busemann subgradient. Consequently, fixed-direction Busemann subdifferentials do not, in general, characterize the zero set of the Fenchel--Young gap associated with \(f_p^*\).
\end{Remark}

\noindent\textbf{Behavior under isometries.}
Given $I:M\to M$ an isometry satisfying $I(q)=p$,  \cite{bento2023fenchel}, establishes the following transformation rule for conjugate functions associated with different base points: \begin{equation}\label{conjugateIso} f_p^*\circ I=(f\circ I)_q^*. \end{equation} By combining Theorem~\ref{thm:FY-Busemann-characterization} with \eqref{conjugateIso}, the next theorem describes how the corresponding subdifferentials transform under isometries of the Riemannian manifold. 
\begin{Theorem}\label{thm:Bp-isometry}
Let $I:M\to M$ be an isometry such that $I(q)=p$. Then
\[
    u\in\partial B^q(f\circ I)(b)
    \quad\Longleftrightarrow\quad
    DI_b(u)\in\partial B^p f(I(b)).
\]
\end{Theorem}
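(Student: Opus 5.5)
The plan is to reduce everything to Theorem~\ref{thm:FY-Busemann-characterization} and the transformation rule \eqref{conjugateIso}. The link between them is a naturality property of the maps \(V\) and \(H\) under the isometry. Concretely, I would establish two geometric facts.

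\textbf{(i) Invariance of the coupling:} \(H_p(I(z),I(y))=H_q(z,y)\) for all \(z,y\in M\).

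\textbf{(ii) Equivariance of the tangent representation:} \(V_p(I(b),I(y))=DI_b\bigl(V_q(b,y)\bigr)\) for all \(b,y\in M\).

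For (i), I use three facts about \(I\). It preserves distances, so \(d(p,I(z))=d(q,z)\). It commutes with exponential maps, \(I\circ\exp_q=\exp_p\circ DI_q\), so \(\exp_p^{-1}I(z)=DI_q(\exp_q^{-1}z)\). It maps unit-speed rays to unit-speed rays, so \(I\circ\gamma_{(q,w)}=\gamma_{(p,DI_q w)}\). From the definition of the Busemann function it follows that \(B^p_{DI_q w}(I(y))=B^q_w(y)\). Combining these with \eqref{eq:Busemann-coupling} gives (i); the case \(z=q\) is trivial, since then \(I(z)=p\) and both sides vanish.

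For (ii), take \(y\neq q\). Then \(\eta_p(I(y))=DI_q(\eta_q(y))\) by the same exponential-map identity. Moreover, \(I\) preserves asymptoticity of rays, because it preserves the bounded-distance condition. Hence the rays \(\gamma_{(p,\eta_p(I(y)))}\) and \(\gamma_{(I(b),-DI_b u_{b,y})}\) are asymptotic. By uniqueness, \(u_{I(b),I(y)}=DI_b(u_{b,y})\). Since \(d(p,I(y))=d(q,y)\), (ii) follows. The case \(y=q\) is immediate, as both sides are zero.

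\textbf{Main argument.} Fix \(u\in T_bM\) and set \(y:=Y_q(b,u)\). By Lemma~\ref{lem:Y-V-inverse-H-support}, \(u=V_q(b,y)\). By Theorem~\ref{thm:FY-Busemann-characterization} applied to \(f\circ I\) with base point \(q\),
\[
u\in\partial B^q(f\circ I)(b)\iff (f\circ I)(b)+(f\circ I)_q^*(y)=H_q(b,y).
\]
Using \eqref{conjugateIso} and (i), the right-hand side becomes
\[
f(I(b))+f_p^*(I(y))=H_p(I(b),I(y)).
\]
By Theorem~\ref{thm:FY-Busemann-characterization} again, now for \(f\) at \(I(b)\) with base point \(p\), this is equivalent to \(V_p(I(b),I(y))\in\partial B^p f(I(b))\). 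By (ii), \(V_p(I(b),I(y))=DI_b(u)\), which proves the claim.

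Properness and domains cause no difficulty: \(b\in\operatorname{dom}(f\circ I)\) if and only if \(I(b)\in\operatorname{dom}f\), and \(f\circ I\) is proper exactly when \(f\) is. Alternatively, one can bypass the conjugate entirely. The support inequality \eqref{eq:Bp-support} for \(f\circ I\) transforms under the substitution \(z\mapsto I(z)\), which is bijective, into the one for \(f\), using (i) and the identity \(Y_p(I(b),DI_b u)=I(Y_q(b,u))\), which is equivalent to (ii).

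\textbf{Expected obstacle.} I expect the main care to go into (ii), in particular the asymptotic-ray uniqueness step. This requires checking that isometries send asymptotic rays to asymptotic rays, that they respect the sign conventions in \(\eta_p\) and \(u_{x,y}\), and that the unit normalization is preserved, which holds because \(DI_b\) is a linear isometry.
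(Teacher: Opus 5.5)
Your proof is correct and takes the same route as the paper. Both reduce to Theorem~\ref{thm:FY-Busemann-characterization} via \eqref{conjugateIso}, the invariance $H_q(b,y)=H_p(I(b),I(y))$, and the equivariance of the primal--dual map; the paper states the latter as $I(y)=Y_p\bigl(I(b),DI_b(u)\bigr)$, which is your (ii) after Lemma~\ref{lem:Y-V-inverse-H-support}. The differences are only that you verify (i) and (ii) explicitly where the paper asserts them in one line, and that you get both directions from a single chain of equivalences rather than applying the argument to $I^{-1}$ for the converse.
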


\begin{proof}
Suppose that $u\in\partial B^q(f\circ I)(b)$, and set
\(
    y:=Y_q(b,u).
\)
By Theorem~\ref{thm:FY-Busemann-characterization},
\[
    (f\circ I)(b)+(f\circ I)_q^*(y)=H_q(b,y).
\]
Since $I$ preserves distances, geodesics, and asymptoticity,
\[
    H_q(b,y)=H_p(I(b),I(y))
\]
and
\[
    I(y)=Y_p\bigl(I(b),DI_b(u)\bigr).
\]
Combining these identities with \eqref{conjugateIso}, we obtain
\[
    f(I(b))+f_p^*(I(y))=H_p(I(b),I(y)).
\]
Theorem~\ref{thm:FY-Busemann-characterization} now gives
\[
    DI_b(u)\in\partial B^p f(I(b)).
\]
The converse follows by applying the same argument to the inverse isometry
$I^{-1}$.
\end{proof}

\noindent\textbf{Busemann subdifferential of radial functions.}
\label{subsec:Bp-radial}
A function $f:M\to\mathbb{R}\cup\{+\infty\}$ is radial at $p$ if there
exists $h:[0,+\infty)\to\mathbb{R}\cup\{+\infty\}$ such that
\(
    f(x)=h(d(x,p)).
\)
As established in \cite{bento2023fenchel},
\[
    f_p^*=h^*\circ d(\cdot,p),
\]
where the conjugate of $h$ is taken over $[0,+\infty)$. Moreover, if $h$ is
proper, lower semicontinuous, and convex, then $f_p^{**}=f$.

As another application of Theorem~\ref{thm:FY-Busemann-characterization}, our objective now is to show that, away from the center of a differentiable
radial function, the Busemann subdifferential reduces to its Riemannian gradient. 

\begin{Theorem}\label{thm:Bp-subdifferential-radial}
Let $f:M\to\mathbb{R}\cup\{+\infty\}$ be radial at $p\in M$, with
\(
    f=h\bigl(d(\cdot,p)\bigr),
\)
where $h:[0,\infty)\to\mathbb{R}$ is differentiable and convex. Assume
that
\[
    h(0)=h'(0)=0
    \qquad\text{and}\qquad
    h'(t)>0,\qquad t>0,
\]
where $h'(0)$ denotes the right derivative at the origin. Then,
\(
    \partial B^p f(x)=\{\operatorname{grad}f(x)\}\), for
 $x\neq p$, and
\(
    \partial B^p f(p)=\{0\}.
\)
\end{Theorem}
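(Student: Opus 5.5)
By Theorem~\ref{thm:FY-Busemann-characterization} and Lemma~\ref{lem:Y-V-inverse-H-support}, $v\in\partial B^p f(x)$ holds exactly when $y:=Y_p(x,v)$ satisfies $f(x)+f_p^*(y)=H_p(x,y)$. Moreover, $v=V_p(x,y)$ is determined by $y$. So I will find every dual point $y$ with zero Fenchel--Young gap and then read off $v=V_p(x,y)$. Set $t:=d(x,p)$ and $\rho:=d(y,p)$. By the radial conjugate formula recalled above, $f_p^*(y)=h^*(\rho)$, with the conjugate taken over $[0,+\infty)$. The zero-gap condition therefore reads $h(t)+h^*(\rho)=H_p(x,y)$.

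\textbf{The case $x\neq p$.} I will bound the gap from below by a chain of two inequalities and characterize equality in each. Put $w:=-\exp_p^{-1}x/t$, so that $H_p(x,y)=t\,B^p_w(y)$.

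First, $B^p_w(p)=0$ and $B^p_w$ is $1$-Lipschitz, so $B^p_w(y)\le\rho$, and hence $H_p(x,y)\le t\rho$.

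Second, the scalar Fenchel--Young inequality on $[0,+\infty)$ gives $t\rho\le h(t)+h^*(\rho)$.

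Equality therefore forces two things.

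1. Equality in the scalar inequality holds iff $\rho\in\partial h(t)$. Since $t>0$ is interior and $h$ is differentiable, this means $\rho=h'(t)$, which is $>0$.

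2. Equality $B^p_w(y)=\rho$ must hold. Let $\sigma$ be the unit-speed geodesic from $p$ to $y$. Since $B^p_w$ is $C^1$ with unit gradient on a Hadamard manifold, $(B^p_w\circ\sigma)'=\langle\operatorname{grad}B^p_w,\sigma'\rangle\le1$. Equality on all of $[0,\rho]$ forces $\sigma'=\operatorname{grad}B^p_w$, so $\sigma$ is the gradient line through $p$. That line is the geodesic $s\mapsto\exp_p(-sw)$, since $B^p_w(\exp_p(sw))=-s$ along the defining ray. Hence $y=\exp_p\bigl(h'(t)\exp_p^{-1}x/t\bigr)$.

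This $y$ lies on the ray from $p$ through $x$. Conversely, this $y$ gives equality in both steps, so $\partial B^p f(x)$ is nonempty and consists of exactly one vector. I expect the rigidity in step 2, namely that equality in the Lipschitz bound pins $y$ to the ray, to be the main obstacle. The gradient-line argument is the route I would take first.

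Next I compute $v=V_p(x,y)=\rho\,u_{x,y}$. Here $\eta_p(y)=w$, which points from $p$ away from $x$. The ray from $x$ asymptotic to $\gamma_{(p,w)}$ is the continuation of the geodesic from $x$ through $p$. By uniqueness of asymptotic directions, $-u_{x,y}=\exp_x^{-1}p/t$, so $u_{x,y}=-\exp_x^{-1}p/t=\operatorname{grad}d(\cdot,p)(x)$. Therefore $v=h'(t)\operatorname{grad}d(\cdot,p)(x)=\operatorname{grad}f(x)$ by the chain rule.

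\textbf{The case $x=p$.} Here $H_p(p,y)=0$ and $h(0)=0$, so the zero-gap condition is $h^*(\rho)=0$. This holds iff $\rho\in\partial h(0)$, where the subdifferential is computed for $h$ on $[0,+\infty)$. That set equals $(-\infty,h'(0)]\cap[0,\infty)=\{0\}$. Thus $y=p$, and $v=V_p(p,p)=0$, which gives $\partial B^p f(p)=\{0\}$.
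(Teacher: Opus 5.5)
Your proof is correct. Its core matches the paper's argument for the reverse inclusion at $x\neq p$: the sandwich $H_p(x,y)=tB^p_w(y)\le t\rho\le h(t)+h^*(\rho)$, which forces $\rho=h'(t)$, then the equality case of the $1$-Lipschitz bound via the unit gradient of $B^p_w$ along the segment from $p$ to $y$, and finally the identification of $V_p(x,y)$ by uniqueness of asymptotic directions.

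The difference is in how the argument is organized. You route every step through Theorem~\ref{thm:FY-Busemann-characterization} and the bijection $Y_p(x,\cdot)\leftrightarrow V_p(x,\cdot)$. Thus $\partial B^pf(x)$ is the image under $V_p(x,\cdot)$ of the set of zero-gap dual points, and you compute that set once.

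The paper instead handles existence and the base point separately:
\begin{itemize}
\item It proves $\operatorname{grad}f(x)\in\partial B^pf(x)$ by checking the support inequality directly, using the tangent-line inequality for $h$ together with $H_p(u,y_x)\le s\,d(u,p)$ for all $u$.
\item At the base point it gets $0$ from the Fermat rule. It then excludes $v\neq0$ by testing the support inequality along $z_t=\exp_p(te)$ and letting $t\downarrow0$.
\end{itemize}
These two steps of the paper use only the definition, not the radial conjugate formula $f_p^*=h^*\circ d(\cdot,p)$.

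Your version is more uniform and gives more at no extra cost. At $x=p$, the zero-gap condition $h(0)+h^*(\rho)=0$ with $\rho=d(p,y)\ge0$ is equivalent to $\rho\in[0,h'_+(0)]$ for any finite convex $h$ that is right differentiable at $0$. The same computation therefore yields $\partial B^pf(p)=\{v\in T_pM:\|v\|\le h'_+(0)\}$, which is the formula stated without proof in the remark after the theorem.
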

\begin{proof}
Fix $x\neq p$ and set
\[
    r:=d(x,p),
    \qquad
    e_x:=\frac{\exp_p^{-1}x}{r}\in T_pM,
    \qquad
    w_x:=-\frac{\exp_x^{-1}p}{r}\in T_xM.
\]
It is known that for  $x\in M\setminus\{p\}$, one has
\begin{equation}\label{eq:gradient-radial-function}
    \operatorname{grad}f(x)
    =h'(r)w_x
    =-\frac{h'(r)}{r}\exp_x^{-1}p.
\end{equation}
Letting $s:=h'(r)>0$,
    $\eta_x:=-e_x$,
    $y_x:=\exp_p(-s\eta_x)=\exp_p(se_x)$,  note that the ray starting from $p$ in the direction $\eta_x$ and the ray starting from $x$ in the direction $-w_x$ lie on the same complete geodesic and are therefore asymptotic.
    Thus,  \eqref{eq:gradient-radial-function} yields \[ \operatorname{grad}f(x)=s w_x, \qquad \bigl\|\operatorname{grad}f(x)\bigr\|=s, \qquad -\frac{\operatorname{grad}f(x)} {\|\operatorname{grad}f(x)\|} =-w_x, \] and, by construction, $\eta_x$ is precisely the unit vector appearing in the definition of $Y_p\bigl(x,\operatorname{grad}f(x)\bigr)$. Consequently, 
    \begin{equation}\label{eq:1001}  Y_p\bigl(x,\operatorname{grad}f(x)\bigr) = \exp_p\left( -\bigl\|\operatorname{grad}f(x)\bigr\|\eta_x \right) =\exp_p(-s\eta_x) =y_x.
    \end{equation}
    On the other hand, because each Busemann function is $1$-Lipschitz and vanishes at its base
point,
\[
    B^p_{-\exp_p^{-1}u}(y_x)
    \leq d(p,y_x)=s,
\]
whenever $u\neq p$; the corresponding inequality for $H_p$ is also
valid when $u=p$, because $H_p(p,y_x)=0$. Hence,
\begin{equation}\label{eq:1002}
    H_p(u,y_x)\leq s\,d(u,p).
\end{equation}
Furthermore, since 
\(
    d(p,y_x)=s\)
and
  \(  B^p_{-e_x}(y_x)=s,
\)
it follows that
\begin{equation}\label{eq:radial-H-at-x}
    H_p(x,y_x)
    =rB^p_{-\exp_p^{-1}x}(y_x)
    =rs.
\end{equation}
Since $s=h'(r)$, the convexity of $h$ yields
\[
\begin{aligned}
    f(u)
    =h\bigl(d(u,p)\bigr)
    \geq h(r)+s\bigl(d(u,p)-r\bigr),\qquad u\in M.
\end{aligned}
\]
Combining this inequality with \eqref{eq:1001}, \eqref{eq:1002} and \eqref{eq:radial-H-at-x}, we obtain
\[ 
f(u)\geq f(x) +H_p\left( u,Y_p\bigl(x,\operatorname{grad}f(x)\bigr) \right) -H_p\left( x,Y_p\bigl(x,\operatorname{grad}f(x)\bigr) \right), \qquad u\in M. 
\] 
This is exactly the global support inequality in Definition~\ref{def:Bp-subdifferential} from which it follows that:
\begin{equation}\label{eq:gradient-in-Bp-subdifferential} \operatorname{grad}f(x)\in\partial B^p f(x). \end{equation}
 For the proof of reverse inclusion, let $v\in\partial B^p f(x)$ and
write:
\[
    y:=Y_p(x,v),
    \qquad
    \rho:=d(p,y)=\lVert v\rVert.
\]
The equality characterization in Theorem~\ref{thm:FY-Busemann-characterization}, together
with the radial conjugacy formula $f_p^*=h^*\circ d(\cdot,p)$, yields
\begin{equation}\label{eq:FY-radial-equality}
    h(r)+h^*(\rho)=H_p(x,y)=rB^p_{-e_x}(y)
    \leq r\rho,
\end{equation}
where the last inequality above follows from the $1$-Lipschitz property of the Busemann function.
On the other hand, the classical Fenchel-Young inequality for $h$
yields
\[
    h(r)+h^*(\rho)\geq r\rho,
\]
which, combined with \eqref{eq:FY-radial-equality},  implies
\[
    h(r)+h^*(\rho)=r\rho
    \quad\text{and}\quad
    B^p_{-e_x}(y)=\rho.
\]
In the one-dimensional case, the equality in the one-dimensional Fenchel-Young inequality gives
\[
    \rho\in\partial h(r)=\{h'(r)\},
\]
and, therefore, $\rho=h'(r)=s$.

We now recall the equality case of the Busemann estimate used above. For a
unit vector $e\in T_pM$ and $q\in M$,
\begin{equation}\label{eq:equality-case-Busemann-estimate}
    B^p_{-e}(q)=d(p,q)
    \quad\Longleftrightarrow\quad
    q=\exp_p\bigl(d(p,q)e\bigr).
\end{equation}
Indeed, the equivalence is immediate when $q=p$. Assume, therefore, that
$q\neq p$, set $L:=d(p,q)>0$, and let
$\sigma:[0,L]\to M$ be the unit-speed minimizing geodesic joining $p$
to $q$. Thus,
\[
    \sigma(0)=p,\qquad
    \sigma(L)=q,\qquad
    \lVert\dot\sigma(t)\rVert=1.
\]
Let us suppose first that
\(
    B^p_{-e}(q)=d(p,q)=L.
\)
Because $B^p_{-e}(p)=0$, it is easy to see that 
\[
\begin{aligned}
    L
    =B^p_{-e}(q)-B^p_{-e}(p)
    =\int_0^L
        \frac{d}{dt}B^p_{-e}(\sigma(t))\,dt
    =\int_0^L
        \bigl\langle
            \operatorname{grad}B^p_{-e}(\sigma(t)),
            \dot\sigma(t)
        \bigr\rangle\,dt.
\end{aligned}
\]
On the other hand, since $B^p_{-e}$ is $1$-Lipschitz and
\[
    \bigl\lVert
        \operatorname{grad}B^p_{-e}(\sigma(t))
    \bigr\rVert=1,
    \qquad
    \lVert\dot\sigma(t)\rVert=1,
\]
the Cauchy--Schwarz inequality yields
\[
    \bigl\langle
        \operatorname{grad}B^p_{-e}(\sigma(t)),
        \dot\sigma(t)
    \bigr\rangle
    \leq1,
    \qquad t\in[0,L].
\]
The integrand is continuous and bounded above by $1$, while its
integral over $[0,L]$ is equal to $L$. It must therefore be identically
equal to $1$; otherwise, continuity would imply that it is strictly
smaller than $1$ on a nontrivial subinterval, making the integral
strictly smaller than $L$. Consequently,
\[
    \bigl\langle
        \operatorname{grad}B^p_{-e}(\sigma(t)),
        \dot\sigma(t)
    \bigr\rangle=1,
    \qquad t\in[0,L].
\]
Equality in the Cauchy--Schwarz inequality, together with the fact that
both vectors have unit norm, implies that
\[
    \operatorname{grad}B^p_{-e}(\sigma(t))
    =\dot\sigma(t),
    \qquad t\in[0,L].
\]
In particular, at $t=0$,
\[
    \dot\sigma(0)
    =\operatorname{grad}B^p_{-e}(p)
    =e.
\]
By uniqueness of geodesics with prescribed initial data,
\[
    \sigma(t)=\exp_p(te),
    \qquad t\in[0,L],
\]
and, hence
\[
    q=\sigma(L)=\exp_p(Le)
    =\exp_p\bigl(d(p,q)e\bigr).
\]
Conversely, let us suppose now that
\[
    q=\exp_p\bigl(d(p,q)e\bigr).
\]
The ray defining $B^p_{-e}$ is $t\mapsto\exp_p(-te)$. Since these two
points lie on opposite sides of the same complete geodesic,
\[
    d\left(\exp_p\bigl(d(p,q)e\bigr),\exp_p(-te)\right)
    =d(p,q)+t.
\]
It follows directly from the definition of the Busemann function that
\[
\begin{aligned}
    B^p_{-e}(q)
    =\lim_{t\to\infty}
        \left[
            d\left(q,\exp_p(-te)\right)-t
        \right]
    =\lim_{t\to\infty}\bigl[d(p,q)+t-t\bigr]
    =d(p,q),
\end{aligned}
\]
which concludes the proof of \eqref{eq:equality-case-Busemann-estimate}.

To complete the reverse inclusion, we now apply \eqref{eq:equality-case-Busemann-estimate} with $e=e_x$, which allows us to obtain
\[
    y=\exp_p(\rho e_x)
    =\exp_p\bigl(h'(r)e_x\bigr)
    =y_x.
\]
The uniqueness of the asymptotic direction in
Definition~\ref{def:Bp-subdifferential} now yields
\[
    v=h'(r)w_x=\operatorname{grad}f(x),
\]
which, combined with \eqref{eq:gradient-in-Bp-subdifferential}, allows us to complete the reverse inclusion and, consequently, proves
\[
    \partial B^p f(x)=\{\operatorname{grad}f(x)\},
    \qquad x\neq p.
\]

It remains to consider the center $p$. Since $h(0)=0$ and $h'(t)>0$
for $t>0$, the point $p$ is the unique global minimizer of $f$.
Therefore,
\[
    0\in\partial B^p f(p).
\]
Conversely, take $v\in\partial B^p f(p)$ and suppose that $v\neq0$.
Set
\[
    \rho:=\lVert v\rVert,
    \qquad
    e:=\frac{v}{\rho},
    \qquad
    y:=\exp_p(\rho e).
\]
For $t>0$, let $z_t:=\exp_p(te)$. From the definition of $H_p$ and the
identity $B^p_{-e}(y)=\rho$, we have
\[
    H_p(z_t,y)=t\rho.
\]
The subgradient inequality at $p$ therefore gives
\[
    h(t)=f(z_t)
    \geq f(p)+H_p(z_t,y)-H_p(p,y)
    =t\rho,
    \qquad t>0.
\]
Dividing by $t$ and letting $t$ go to $0$, we conclude that
\[
    \rho\leq h'(0)=0,
\]
which contradicts $\rho>0$. Thus $v=0$, and consequently
\(
    \partial B^p f(p)=\{0\},
\)
which concludes the proof of the theorem.
\end{proof}

\medskip
\begin{Remark}[Necessity of the assumption $h'(0)=0$]
The condition $h'(0)=0$ cannot, in general, be omitted. Indeed, let
$M=\mathbb{R}^n$, $p=0$, and consider
\[
    h(t)=t,\qquad t\geq0.
\]
In this case, $h$ is differentiable, convex, satisfies $h(0)=0$,
$h'(t)=1>0$ for every $t\geq0$, and the associated radial function is
\[
    f(x)=h(\lVert x\rVert)=\lVert x\rVert.
\]
Since the $B^0$-subdifferential coincides with the classical convex
subdifferential in the Euclidean setting, we have
\[
    \partial B^0 f(0)
    =\partial\lVert\cdot\rVert(0)
    =\bigl\{v\in\mathbb{R}^n:\lVert v\rVert\leq1\bigr\},
\]
which is not the singleton $\{0\}$. More generally, if $h$ is convex
and right differentiable at the origin, then
\[
    \partial B^p f(p)
    =\bigl\{v\in T_pM:\lVert v\rVert\leq h'_+(0)\bigr\}.
\]
Consequently, within this class of radial functions, the equality
$\partial B^p f(p)=\{0\}$ holds precisely when $h'_+(0)=0$.
\end{Remark}

\subsection{Comparison with fixed-direction Busemann subdifferentials}
\label{subsec:comparison-Busemann-subdifferentials}

During the development of this research program, other notions of
subdifferential based on Busemann functions were proposed; see
\cite{goodwin2024subgradient,criscitiello2025horospherically,ferreira2026subdifferential}.
Accounting for differences in sign conventions, normalizations, and
parametrizations, \cite[Remarks~2 and~3]{ferreira2026subdifferential} shows that the Busemann
subdifferential of \cite{goodwin2024subgradient}, the horospherical
subdifferential of \cite{criscitiello2025horospherically}, and the
$B$-subdifferential of \cite{ferreira2026subdifferential} are equivalent at the level of their
underlying global support inequality. It is therefore sufficient, for the
purpose of the present comparison, to use the unified formulation given in
\cite{ferreira2026subdifferential}.
More precisely, for a convex function $f:M\to\mathbb{R}$ and $x\in M$, the
$B$-subdifferential in \cite{ferreira2026subdifferential} is given, in our notation, by
\begin{equation}\label{eq:fixed-B-subdifferential}
    s\in\partial^b f(x)
    \quad\Longleftrightarrow\quad
    f(z)\geq f(x)+\|s\|B^x_{-s}(z),
    \qquad z\in M.
\end{equation}
The right-hand side of \eqref{eq:fixed-B-subdifferential} is a single scaled
Busemann function of $z$, associated with the fixed ideal direction determined
by $-s$. By contrast, the supporting term in
Definition~\ref{def:Bp-subdifferential} is the difference
\begin{equation}\label{eq:variable-Busemann-support}
\begin{aligned}
&H_p\bigl(z,Y_p(x,v)\bigr)-H_p\bigl(x,Y_p(x,v)\bigr)\\
&\quad=
\left[
d(p,z)
B^p_{-\frac{\exp_p^{-1}z}{d(p,z)}}
   \bigl(Y_p(x,v)\bigr)
\right]
-
\left[
d(p,x)
B^p_{-\frac{\exp_p^{-1}x}{d(p,x)}}
   \bigl(Y_p(x,v)\bigr)
\right],
\end{aligned}
\end{equation}
whenever $x,z\neq p$, with the corresponding weighted Busemann term defined to
be zero when its first argument is $p$. In particular, both the weight and the
Busemann direction in the first term of
\eqref{eq:variable-Busemann-support} depend on $z$. Moreover, the associated
Busemann function is evaluated at the dual point $Y_p(x,v)$, rather than at
$z$. Hence, unlike \eqref{eq:fixed-B-subdifferential}, the support model in
\eqref{eq:variable-Busemann-support} is not, in general, generated by a single
Busemann function of $z$ with a fixed ideal direction.

The distinction between the two constructions disappears in the flat
setting. Indeed, let $M=\mathbb{R}^n$, fix $p\in M$, and let
$f:M\to\mathbb{R}\cup\{+\infty\}$ be proper and convex. For any
$x\in\operatorname{dom}f$ and $v\in T_xM\simeq\mathbb{R}^n$, we have
\[
    Y_p(x,v)=p+v.
\]
Consequently, by \eqref{acoplamento1},
\[
\begin{aligned}
&H_p\bigl(z,Y_p(x,v)\bigr)
    -H_p\bigl(x,Y_p(x,v)\bigr)\\
&\qquad
=\langle v,z-p\rangle-\langle v,x-p\rangle
=\langle v,z-x\rangle,
\qquad z\in M.
\end{aligned}
\]
For $v\neq 0$, the corresponding Euclidean Busemann function also
satisfies
\[
    \|v\|B^x_{-v}(z)=\langle v,z-x\rangle.
\]
When $v=0$, both expressions are understood to vanish identically.
Hence, in Euclidean space, the support inequalities defining
$\partial B^p f(x)$ and $\partial^b f(x)$ both reduce to the classical
subgradient inequality
\[
    f(z)\geq f(x)+\langle v,z-x\rangle,
    \qquad z\in M.
\]
It follows that the two Busemann-based subdifferentials coincide with
the classical convex subdifferential:
\begin{equation}\label{equality-Flatcase}
        \partial B^p f(x)=\partial^b f(x)=\partial f(x),
    \qquad x\in\operatorname{dom}f.
\end{equation}

The following example shows that the coincidence in \eqref{equality-Flatcase} does not extend to
general Hadamard manifolds.
\begin{Example}
\label{ex:comparison-hyperbolic}
Consider the Poincaré disk model \(\mathbb D\) of the hyperbolic plane
with constant sectional curvature \(-1\), and let \(p=0\). Let
\(e_1,e_2\in T_0\mathbb D\) be orthogonal unit vectors, let \(r>0\),
and set
\[
    v:=re_1,
    \qquad
    y:=\exp_0(v),
    \qquad
    f(z):=H_0(y,z)=rB^0_{-e_1}(z).
\]
Because \(f\) is a smooth geodesically convex function and
\(
    f(z)-f(0)=\|v\|B^0_{-v}(z),
\)
it follows from the fixed-direction support condition
\eqref{eq:fixed-B-subdifferential} that $v\in\partial^b f(0)$.
Now take $0<b<r$ and set $z:=\exp_0(be_2)$.
Since \(Y_0(0,v)=y\), the support inequality defining
\(\partial B^0 f(0)\) would require

$$
    f(z)-f(0)
    \geq
    H_0(z,y)-H_0(0,y).
$$
Applying Proposition~\ref{prop:Hp-asymmetry-phi} with
$$
    \eta=-e_1,
    \qquad
    \theta=e_2,
    \qquad
    s=r,
$$
and with the radial parameter in the direction \(\theta\) equal to
\(b\), we obtain
\begin{equation*}
\label{eq:comparison-H-values-phi}
H_0(y,z)
=
r\log(\cosh b)
=
rb\,\varphi(b),
\qquad
H_0(z,y)
=
b\log(\cosh r)
=
rb\,\varphi(r),
\end{equation*}
where $\varphi(t)=\log(\cosh t)/t,$ $t>0$, is defined in \eqref{eq:functionAux}. 
Since \(f(z)=H_0(y,z)\), \(f(0)=H_0(y,0)=0\), and
\(H_0(0,y)=0\), it follows that

$$
    f(z)-f(0)
    =
    rb\,\varphi(b),\qquad  H_0(z,y)-H_0(0,y)
    =
    rb\,\varphi(r) 
$$
Because \(\varphi\) is strictly increasing and \(0<b<r\), one has

$$
    rb\,\varphi(b)
    <
    rb\,\varphi(r).
$$
Equivalently,
$$
    f(z)-f(0)
    =
    H_0(y,z)
    <
    H_0(z,y)
    =
    H_0(z,y)-H_0(0,y).
$$
Therefore, the support inequality
\eqref{eq:Bp-support} fails at \(z\), and consequently
$
    v\notin\partial B^0 f(0).
$
Thus, the failure of inclusion is a direct consequence of the
nonreciprocity of the Busemann coupling:

$$
    H_0(z,y)-H_0(y,z)
    =
    rb\bigl[\varphi(r)-\varphi(b)\bigr]
    >0.
$$
\end{Example}
The preceding example shows that the fixed-direction Busemann
subdifferential need not be contained in the \(B^p\)-subdifferential.
We now establish the reverse separation in the same Poincaré disk.
\begin{Example}[Reverse separation in the Poincaré disk]
\label{ex:reverse-comparison-hyperbolic}
Consider the same Poincaré disk model \(\mathbb D\) and retain the
notation of Example~\ref{ex:comparison-hyperbolic}. Thus, \(p=0\),
\(e_1,e_2\in T_0\mathbb D\) are orthogonal unit vectors, \(r>0\),
$
    v:=re_1,$   
    $y:=\exp_0(v).
$
Let
    $\ell=\{\exp_0(te_2):t\in\mathbb R\}$
be the complete hyperbolic geodesic through \(0\) orthogonal to the
direction of \(v\). Define \(g:\mathbb D\to\mathbb R\) by
\begin{equation}
\label{eq:reverse-separation-function-phi}
g(z)
:=
r\varphi(r)\,d(0,z)
+
\tanh(r)\,d(z,\ell),
\end{equation}
where $\varphi(t)=(\log\cosh t)/t$. 
We claim that \(g\) is finite-valued, geodesically convex, and $$
    v\in\partial B^0 g(0)
    \qquad\text{but}\qquad
    v\notin\partial^b g(0).
$$
First, observe that \(g\) is finite-valued and geodesically convex. Indeed, \(\log(\cosh r)>0\) and \(\tanh(r)>0\), while \(d(0,\cdot)\) and \(d(\cdot,\ell)\) are geodesically convex because they are distance functions to nonempty closed geodesically convex subsets of a Hadamard manifold. Moreover, since \(0\in\ell\), we have \(g(0)=0\).
Now, let us prove that \(v\in\partial B^0g(0)\). From the definition of the
primal-dual correspondence,
$$
    Y_0(0,v)=\exp_0(v)=y.
$$
Let \(z\in\mathbb D\setminus\{0\}\), and set
$t:=d(0,z)$ and 
$u:=(\exp_0^{-1}z)/t.
$
Taking into account that $y=\exp_0(re_1)=\exp_0\bigl(-r(-e_1)\bigr),
$ letting \(\alpha\) be the angle between \(e_1\) and  \(-u\),  Lemma~\ref{lem:Busemann-constant-curvature} yields
\begin{equation}
\label{eq:H0-general-reverse-separation}
H_0(z,y)
=
tB^0_{-u}(y)
=
t\log\bigl(\cosh r-\sinh r\cos\alpha\bigr).
\end{equation}
Let \(q:=P_\ell(z)\) be the metric projection of \(z\) onto \(\ell\),
and set
$$
    \delta:=d(z,\ell)=d(z,q).
$$
Since \(\ell\) is a complete geodesic, the geodesic segment joining
\(q\) to \(z\) is orthogonal to \(\ell\). Hence, the geodesic triangle
with vertices \(0\), \(q\), and \(z\) is right-angled at \(q\), with
hypotenuse of length \(t\).
Let \(\beta\in[0,\pi/2]\) be the acute angle between \(u\) and ${span}\{e_2\}$ (space generated by $e_2$).
Since \(e_1\perp e_2\), we have
$$
    \sin\beta
    =
    |\langle u,e_1\rangle|
    =
    |\cos\alpha|.
$$
The classical hyperbolic law of sines applied to the right triangle
\boxed{(0,q,z)} gives
$$
    \frac{\sinh\delta}{\sin\beta}
    =
    \frac{\sinh t}{\sin(\pi/2)}
    =
    \sinh t,
$$
and, consequently,
\begin{equation}
\label{eq:distance-to-geodesic-reverse-separation}
\sinh\delta
=
\sinh t\;|\cos\alpha|.
\end{equation}
This identity also holds in the degenerate cases
\(\cos\alpha=0\) and \(|\cos\alpha|=1\).
If \(\cos\alpha>0\), then
$
    \log\bigl(\cosh r-\sinh r\,\cos\alpha\bigr)
    \leq
    \log(\cosh r)
    =
    r\varphi(r)
$
and, therefore, 
$$
H_0(z,y)
    \leq
    tr\varphi(r)
    \leq
    g(z).
$$
Let us suppose now that \(\cos\alpha\leq0\) and note that \[\log\bigl(\cosh r-\sinh r\cos\alpha\bigr)
=
\log(\cosh r)
+
\log\bigl(1-\tanh(r)\cos\alpha\bigr).\]
Hence, using that   \(\log(1+s)\leq s\) for \(s\geq0\), definition of $\varphi(\cdot)$ and taking into account that $\cos\alpha\leq 0$,
\begin{align*}
\log\bigl(\cosh r-\sinh r\cos\alpha\bigr)
\leq
r\varphi(r)-\tanh(r)\cos\alpha.
\end{align*}
Since \(\cos\alpha\in[-1,0]\), the convexity of the hyperbolic $\sinh$ and
\(\sinh(0)=0\) imply

$$
    \sinh(-t\cos\alpha)\leq -\cos\alpha\sinh t.
$$
Combining this inequality with
\eqref{eq:distance-to-geodesic-reverse-separation} and using the
monotonicity of \(\emph{arcsinh}(\cdot)\), we obtain

$$
    -t\cos\alpha
    \leq
    \operatorname{arsinh}(-\cos\alpha\sinh t)
    =
    \delta.
$$
Consequently,
\begin{align*}
H_0(z,y)
\leq
tr\varphi(r)-t\tanh(r)\cos\alpha
\leq
tr\varphi(r)+\tanh(r)\delta
=g(z).
\end{align*}
For \(z=0\), the same inequality follows from
\(H_0(0,y)=g(0)=0\). Thus,
$$
    g(z)
    \geq
    g(0)+H_0(z,y)-H_0(0,y),
    \qquad z\in\mathbb D.
$$
Since \(y=Y_0(0,v)\), Definition~\ref{def:Bp-subdifferential} yields
$$
    v\in\partial B^0g(0).
$$
We next express the fixed-direction support in terms of the same
coupling. Since \(y=\exp_0(re_1)\), one has
\begin{equation}
\label{eq:fixed-support-as-H-reverse}
H_0(y,z)
=
rB^0_{-e_1}(z)
=
\|v\|B^0_{-v}(z),
\qquad z\in\mathbb D.
\end{equation}
Therefore, membership \(v\in\partial^b g(0)\) would require

$$
    g(z)-g(0)\geq H_0(y,z),
    \qquad z\in\mathbb D.
$$
Choose \(T>r\) and set
$$
    z_T:=\exp_0(Te_2)\in\ell.
$$
Applying Proposition~\ref{prop:Hp-asymmetry-phi} with

$$
    \eta=-e_1,
    \qquad
    \theta=e_2,
    \qquad
    s=r,
$$
and with the radial parameter in the direction \(\theta\) equal to
\(T\), we obtain
\begin{equation}
\label{eq:reverse-H-values-phi}
H_0(z_T,y)
=
rT\varphi(r),
\qquad
H_0(y,z_T)
=
rT\varphi(T).
\end{equation}
Since \(z_T\in\ell\), definition
\eqref{eq:reverse-separation-function-phi} gives

$$
    g(z_T)
    =
    rT\varphi(r)
    =
    H_0(z_T,y).
$$
Because \(T>r\) and \(\varphi\) is strictly increasing, it follows
from \eqref{eq:reverse-H-values-phi} that

$$
    g(z_T)-g(0)
    =
    H_0(z_T,y)
    =
    rT\varphi(r)
    <
    rT\varphi(T)
    =
    H_0(y,z_T).
$$
In view of \eqref{eq:fixed-support-as-H-reverse}, this is precisely
$$
    g(z_T)
    <
    g(0)+\|v\|B^0_{-v}(z_T).
$$
Hence, the fixed-direction support inequality fails at \(z_T\), and
therefore

$$
    v\notin\partial^b g(0).
$$
\end{Example}

\begin{Remark}
Examples~\ref{ex:comparison-hyperbolic} and
\ref{ex:reverse-comparison-hyperbolic} establish that
$$
    \partial^b f(0)\nsubseteq\partial B^0f(0)
    \qquad\text{and}\qquad
    \partial B^0g(0)\nsubseteq\partial^b g(0).
$$
In the first example, the separation follows from
$$
    H_0(z,y)-H_0(y,z)
    =
    rb\bigl[\varphi(r)-\varphi(b)\bigr]>0,
    \qquad 0<b<r,
$$

whereas in the second it follows from
$$
    H_0(y,z_T)-H_0(z_T,y)
    =
    rT\bigl[\varphi(T)-\varphi(r)\bigr]>0,
    \qquad T>r.
$$
Thus, the two failures of inclusion correspond to opposite radial
manifestations of the same nonreciprocity of \(H_0\). Consequently,
although the two Busemann-based subdifferentials coincide in the
Euclidean setting, they are genuinely incomparable on the Poincaré disk.
\end{Remark}

The preceding examples show that the order of the arguments of \(H_p\)
gives rise to two genuinely distinct support theories. In particular,
Example~\ref{ex:comparison-hyperbolic} should not be interpreted as showing that the
\(B^p\)-subdifferential fails on its natural generating sections. Indeed,
the function considered there is a section of the reversed form
$$
    z\longmapsto H_p(y,z),
$$
which generates the fixed-direction Busemann support at the base point,
whereas both the \(B^p\)-subdifferential and the conjugate \(f_p^*\) are
built from sections with the orientation

$$
    z\longmapsto H_p(z,y).
$$
By Lemma~\ref{lem:Y-V-inverse-H-support}, each such section admits an
exact global \(B^p\)-support at every point. The following proposition
extends this observation to upper envelopes of these natural generators:
whenever one of the generating sections is active at a point, it
determines a \(B^p\)-subgradient there. Thus, the separation exhibited by
the preceding examples reflects the asymmetry of \(H_p\) and the
resulting distinction between two support theories, rather than a
failure of the present subdifferential on its own generating class.
The preceding discussion suggests a natural functional class associated
with the orientation \(H_p(\cdot,y)\). We now show that, for
finite-valued functions, pointwise nonemptiness of the
\(B^p\)-subdifferential is equivalent to representability as an
\(H_p\)-envelope whose defining supremum is attained at every point.

\begin{definition}[Pointwise-attained \(H_p\)-envelope]
\label{def:pointwise-attained-Hp-envelope}
A function \(f:M\to\mathbb R\) is called an \(H_p\)-envelope if there
exist a nonempty index set \(A\), points \(y_\alpha\in M\), and constants
\(a_\alpha\in\mathbb R\), \(\alpha\in A\), such that
$$
    f(z)
    =
    \sup_{\alpha\in A}
    \bigl\{H_p(z,y_\alpha)-a_\alpha\bigr\},
    \qquad z\in M.
$$
For \(x\in M\), define the active index set
$$
    A_f(x)
    :=
    \left\{
        \alpha\in A:
        f(x)=H_p(x,y_\alpha)-a_\alpha
    \right\}.
$$
The envelope is said to be pointwise attained if
\(A_f(x)\neq\varnothing\) for every \(x\in M\).
\end{definition}
\begin{Theorem}[Nonemptiness and pointwise-attained \(H_p\)-envelopes]
\label{thm:Hp-envelope-characterization}
Let \(f:M\to\mathbb R\). Then the following statements are equivalent:
\begin{enumerate}
\item[\rm (i)]
$
    \partial B^p f(x)\neq\varnothing$,
$x\in M;
$
\item[\rm (ii)] The function \(f\) is a pointwise-attained
\(H_p\)-envelope.
\end{enumerate}
Moreover, for any pointwise-attained representation of \(f\),
\begin{equation}
\label{eq:active-Hp-generators-subgradients}
\left\{
V_p(x,y_\alpha):
\alpha\in A_f(x)
\right\}
\subseteq
\partial B^p f(x),
\qquad x\in M.
\end{equation}
\end{Theorem}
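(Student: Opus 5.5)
The argument runs in two directions, with the inclusion \eqref{eq:active-Hp-generators-subgradients} proved as part of (ii)\(\Rightarrow\)(i). Throughout, the key tool is the identity \(Y_p(x,V_p(x,y))=y\) from Lemma~\ref{lem:Y-V-inverse-H-support}. It turns the support inequality \eqref{eq:Bp-support} for \(V_p(x,y)\) into
\[
f(z)\geq f(x)+H_p(z,y)-H_p(x,y),\qquad z\in M.
\]

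\emph{(ii)\(\Rightarrow\)(i) and the inclusion.} Assume \(f(z)=\sup_{\alpha\in A}\{H_p(z,y_\alpha)-a_\alpha\}\) is pointwise attained. Fix \(x\in M\) and \(\alpha\in A_f(x)\). For every \(z\in M\) the supremum gives
\[
f(z)\geq H_p(z,y_\alpha)-a_\alpha .
\]
Activity of \(\alpha\) at \(x\) gives \(a_\alpha=H_p(x,y_\alpha)-f(x)\). Substituting, we get
\[
f(z)\geq f(x)+H_p(z,y_\alpha)-H_p(x,y_\alpha),\qquad z\in M.
\]
By the reformulation above with \(y=y_\alpha\), this says exactly that \(V_p(x,y_\alpha)\in\partial B^p f(x)\). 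This proves \eqref{eq:active-Hp-generators-subgradients}. Since \(A_f(x)\neq\varnothing\) for every \(x\), statement (i) follows.

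\emph{(i)\(\Rightarrow\)(ii).} For each \(x\in M\), choose \(v_x\in\partial B^p f(x)\) (axiom of choice) and set \(y_x:=Y_p(x,v_x)\) and \(a_x:=H_p(x,y_x)-f(x)\). Index the envelope by \(A:=M\); the constants are real because \(f\) is finite-valued. The support inequality at \(x\) reads
\[
f(z)\geq H_p(z,y_x)-a_x\quad\text{for all } z\in M,
\]
so \(\sup_{x\in M}\{H_p(z,y_x)-a_x\}\leq f(z)\). At \(x=z\) the corresponding term equals \(f(z)\), by the choice of \(a_z\). Hence the supremum equals \(f(z)\) and is attained at \(x=z\), so \(z\in A_f(z)\) for every \(z\). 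Therefore \(f\) is a pointwise-attained \(H_p\)-envelope.

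I do not expect a genuine obstacle. The only delicate point is making sure the orientation \(H_p(\cdot,y)\) in the envelope matches the one in Definition~\ref{def:Bp-subdifferential}, and this is exactly what Lemma~\ref{lem:Y-V-inverse-H-support} provides. Finite-valuedness is used in two places: it keeps every \(a_x\) real, and it makes every \(x\in M\) lie in \(\operatorname{dom}f\), so that the subdifferential is defined everywhere.
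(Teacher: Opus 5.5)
Your proposal is correct and follows the paper's proof essentially step for step, including the use of \(Y_p(x,V_p(x,y))=y\) to obtain the inclusion and (ii)\(\Rightarrow\)(i). The only cosmetic difference is in (i)\(\Rightarrow\)(ii): you select one subgradient per point (by a choice function) and index the envelope by \(M\), whereas the paper indexes it by all pairs \((\xi,v)\) with \(v\in\partial B^p f(\xi)\), which needs no global choice.
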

\begin{proof}
Assume first that \({\rm(ii)}\) holds. Fix \(x\in M\) and choose
\(\alpha_x\in A_f(x)\). Since \(\alpha_x\) is active at \(x\),
$$  f(x)=H_p(x,y_{\alpha_x})-a_{\alpha_x}.
$$
For every \(z\in M\), the envelope representation gives
$$
\begin{aligned}
    f(z)
    \geq H_p(z,y_{\alpha_x})-a_{\alpha_x}
=f(x)+H_p(z,y_{\alpha_x})-H_p(x,y_{\alpha_x}).
\end{aligned}
$$
Using  $Y_p\bigl(x,V_p(x,y_{\alpha_x})\bigr)=y_{\alpha_x},
$
we conclude that $V_p(x,y_{\alpha_x})\in\partial B^p f(x).
$
Due to the arbitrariness of \(x\), \({\rm(i)}\) follows. The same argument
applied to every \(\alpha\in A_f(x)\) proves
\eqref{eq:active-Hp-generators-subgradients}.

Conversely, assume that \({\rm(i)}\) holds, and consider the index set
$$
A:=\left\{
        (\xi,v):
        \xi\in M,\;
        v\in\partial B^p f(\xi)
    \right\}.
$$
For \(\alpha=(\xi,v)\in A\), define
$y_\alpha:=Y_p(\xi,v)$ and $a_\alpha:=H_p(\xi,y_\alpha)-f(\xi).
$
Because \(v\in\partial B^p f(\xi)\), the defining support inequality
yields
$$
\begin{aligned}
    f(z)
    \geq
f(\xi)+H_p(z,y_\alpha)-H_p(\xi,y_\alpha)
    =H_p(z,y_\alpha)-a_\alpha,
    \qquad z\in M.
\end{aligned}
$$
Taking the supremum over \(\alpha\in A\), we obtain
$$
    f(z)
    \geq
    \sup_{\alpha\in A}
    \bigl\{H_p(z,y_\alpha)-a_\alpha\bigr\}.
$$
On the other hand, fix \(z\in M\). By \({\rm(i)}\), choose
\(v_z\in\partial B^p f(z)\) and let
\(\alpha_z:=(z,v_z)\). By construction,
$$
\begin{aligned}
    H_p(z,y_{\alpha_z})-a_{\alpha_z}
    =
    H_p(z,y_{\alpha_z})
    - \bigl[H_p(z,y_{\alpha_z})-f(z)\bigr]
    =f(z).
\end{aligned}
$$
Therefore,
$$
    f(z)
    =
    \sup_{\alpha\in A}
    \bigl\{H_p(z,y_\alpha)-a_\alpha\bigr\},
$$
and the supremum is attained at \(\alpha_z\). Hence \(f\) is a
pointwise-attained \(H_p\)-envelope, and the proof is complete.
\end{proof}

\begin{Corollary}[Finite \(H_p\)-envelopes]
\label{cor:finite-Hp-envelopes}
Let \(y_1,\ldots,y_m\in M\) and \(a_1,\ldots,a_m\in\mathbb R\), and
define
$$
    f(z)
    :=
    \max_{1\leq i\leq m}
    \bigl\{H_p(z,y_i)-a_i\bigr\}.
$$
Then,
$
    \partial B^p f(x)\neq\varnothing,$
    $x\in M$.
More precisely, every active index
$$
    i\in
    \operatorname*{argmax}_{1\leq j\leq m}
    \bigl\{H_p(x,y_j)-a_j\bigr\}
$$
determines a \(B^p\)-subgradient
$    V_p(x,y_i)\in\partial B^p f(x).
$
\end{Corollary}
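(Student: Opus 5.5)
This corollary follows directly from Theorem~\ref{thm:Hp-envelope-characterization}, specialized to a finite index set. We take $A:=\{1,\ldots,m\}$, with points $y_i$ and constants $a_i$ as given. Then $f$ is an $H_p$-envelope in the sense of Definition~\ref{def:pointwise-attained-Hp-envelope}. It is finite-valued, because each $H_p(z,y_i)$ is a real number: Busemann functions are real-valued, and $H_p(p,\cdot)=0$. The only point to check is pointwise attainment, and here finiteness of $A$ does all the work. For each fixed $x\in M$, the maximum of the finitely many real numbers $H_p(x,y_j)-a_j$ is attained. Hence
\[
A_f(x)=\operatorname*{argmax}_{1\leq j\leq m}\bigl\{H_p(x,y_j)-a_j\bigr\}\neq\varnothing .
\]

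Given attainment, the implication (ii)$\Rightarrow$(i) of Theorem~\ref{thm:Hp-envelope-characterization} yields $\partial B^p f(x)\neq\varnothing$ for every $x\in M$. The inclusion \eqref{eq:active-Hp-generators-subgradients} then gives $V_p(x,y_i)\in\partial B^p f(x)$ for every active index $i\in A_f(x)$, which is precisely the sharper claim.

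For transparency, one can also verify the inclusion directly rather than citing the theorem. Fix an active index $i$. For every $z\in M$,
\[
f(z)\geq H_p(z,y_i)-a_i=f(x)+H_p(z,y_i)-H_p(x,y_i).
\]
Lemma~\ref{lem:Y-V-inverse-H-support} gives $Y_p\bigl(x,V_p(x,y_i)\bigr)=y_i$, so this inequality is exactly the support inequality \eqref{eq:Bp-support} for the vector $V_p(x,y_i)$. There is no real obstacle in this argument; the only substantive observation is that a maximum over a finite set is always attained.
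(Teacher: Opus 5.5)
Your proposal is correct and follows essentially the paper's own route. The paper states this result as an immediate consequence of the (ii)$\Rightarrow$(i) direction of Theorem~\ref{thm:Hp-envelope-characterization}, with finite index set $\{1,\ldots,m\}$ where attainment is automatic, and your direct verification via $Y_p\bigl(x,V_p(x,y_i)\bigr)=y_i$ is exactly the support-inequality computation in that theorem's proof.
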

The preceding characterization identifies pointwise-attained
\(H_p\)-envelopes as precisely the finite-valued functions admitting
\(B^p\)-supports at every point. The following two examples illustrate
how familiar models already considered in our development fit naturally
within this framework. The first shows that the radial convex functions
covered by Theorem~\ref{thm:Bp-subdifferential-radial} admit pointwise-attained
\(H_p\)-envelope representations, while the second provides an explicit
representation of the fundamental nonsmooth model
\(\lambda d(p,\cdot)\). Thus, their purpose is to exhibit concrete
smooth and nonsmooth instances of the functional class characterized
above.

\begin{Example}[Radial functions within the \(H_p\)-envelope framework]
\label{rem:radial-Hp-envelopes}
The radial functions considered in Theorem~\ref{thm:Bp-subdifferential-radial}
provide a concrete class of attained \(H_p\)-envelopes. Indeed, for
each \(\xi\in M\), set
\[
    v_\xi:=\operatorname{grad}f(\xi),\qquad
    y_\xi:=Y_p(\xi,v_\xi),\qquad
    a_\xi:=H_p(\xi,y_\xi)-f(\xi).
\]
Since \(v_\xi\in\partial B^p f(\xi)\), the corresponding support
inequality gives
\[
    f(z)\geq H_p(z,y_\xi)-a_\xi,
    \qquad z,\xi\in M.
\]
Consequently,
\[
    f(z)
    =
    \sup_{\xi\in M}
    \bigl\{H_p(z,y_\xi)-a_\xi\bigr\},
\]
where the supremum is attained at \(\xi=z\). Thus, the radial class
treated above is contained in the natural class generated by the
correctly oriented sections \(H_p(\cdot,y)\). Radiality alone, however,
does not imply such a representation without the hypotheses ensuring
the \(B^p\)-support established in Theorem~\ref{thm:Bp-subdifferential-radial}.
\end{Example}
\begin{Example}[Distance from the base point within the
\(H_p\)-envelope framework]
\label{ex:distance-Hp-envelope}
Let \(M\) be a Hadamard manifold, \(p\in M\), and 
\(\lambda>0\). We claim that
\begin{equation}
\label{eq:distance-Hp-envelope}
\lambda d(p,z)
=
\max_{\substack{y\in M\ d(p,y)=\lambda}}
H_p(z,y),
\qquad z\in M.
\end{equation}
Indeed, if \(z=p\), then both sides of
\eqref{eq:distance-Hp-envelope} vanish, since $H_p(p,y)=0,$ $y\in M.$
Now let \(z\neq p\), and set
$$
    r:=d(p,z),
    \qquad    e_z:=\frac{\exp_p^{-1}z}{r}.
$$
For every \(y\in M\) satisfying \(d(p,y)=\lambda\), the definition of
\(H_p\) gives
$H_p(z,y)=rB_{-e_z}^p(y).
$
Because \(B_{-e_z}^p\) is \(1\)-Lipschitz and
\(B_{-e_z}^p(p)=0\), we have
$$
    B_{-e_z}^p(y)
    =
    B_{-e_z}^p(y)-B_{-e_z}^p(p)
    \leq d(p,y)
    =
    \lambda.
$$
Consequently, $H_p(z,y)
    \leq
    \lambda r
    =
    \lambda d(p,z),$
and, hence,
$$
    \sup_{\substack{y\in M\\ d(p,y)=\lambda}}
    H_p(z,y)
    \leq
    \lambda d(p,z).
$$
To show that this upper bound is attained, define
$y_z:=\exp_p(\lambda e_z).
$
Then \(d(p,y_z)=\lambda\). From the definition of the Busemann
function,
\begin{equation}\label{eq:Busemann10001}
\begin{aligned}
    B_{-e_z}^p(y_z)
    &=
    \lim_{t\to+\infty}
    \left[
        d\bigl(y_z,\exp_p(-te_z)\bigr)-t
    \right].
\end{aligned}
\end{equation}
Since \(y_z=\exp_p(\lambda e_z)\) and
\(\exp_p(-te_z)\) lie on the same complete geodesic and geodesic segments are minimizing, one has $d\bigl(y_z,\exp_p(-te_z)\bigr)=\lambda+t$. Thus, from \eqref{eq:Busemann10001}
we obtain $B_{-e_z}^p(y_z)
    =\lambda,
$
and, hence $H_p(z,y_z)
    =
    rB_{-e_z}^p(y_z)
    =
    \lambda r
    =
    \lambda d(p,z)$, which 
proves \eqref{eq:distance-Hp-envelope}.
Thus, \(\lambda d(p,\cdot)\) is an attained \(H_p\)-envelope generated
by the sections \(H_p(\cdot,y)\) with \(d(p,y)=\lambda\). By
Theorem~\ref{thm:Hp-envelope-characterization},

$$
    \partial B^p\bigl(\lambda d(p,\cdot)\bigr)(z)
    \neq\varnothing,
    \qquad z\in M.
$$
It is worth pointing out that, for \(\lambda>0\), this example is not covered directly
by the differentiable radial case at \(p\), since \(d(p,\cdot)\) is
not differentiable at its center.
\end{Example}

The Examples~\ref{ex:comparison-hyperbolic} and
\ref{ex:reverse-comparison-hyperbolic} are two separation examples that establish the incomparability of the two
Busemann-based subdifferentials, but they do not by themselves determine
the variational meaning of their nonemptiness. The following theorem
reveals a sharper distinction at the distinguished base point. For
functions differentiable at \(p\) in hyperbolic space, nonemptiness of the
\(B^p\)-subdifferential is equivalent to global minimality and, by
Theorem~\ref{thm:FY-Busemann-characterization}, to the vanishing of the Fenchel-Young gap at the dual
point \(y=p\).

\begin{Theorem}[Base-point rigidity in hyperbolic space]
\label{thm:Bp-base-point-rigidity}
Let \(M=\mathbb H_k^n\), with \(n\geq2\) and constant sectional curvature
\(-k^2\), where \(k>0\), and let \(p\in M\). If
\(f:M\to\mathbb R\) is differentiable at \(p\), then
$$
    \partial B^p f(p)
    =
    \begin{cases}
        \{0_p\},
        &\displaystyle
        p\in\operatorname*{argmin}_{z\in M}f(z),\\[2mm]
        \varnothing,
        &\displaystyle
        p\notin\operatorname*{argmin}_{z\in M}f(z),
    \end{cases}
$$
where \(0_p\) denotes the zero vector in \(T_pM\).
\end{Theorem}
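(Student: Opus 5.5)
The plan has two parts: use the Fermat rule for the zero vector, and show by a one-sided blow-up at \(p\) along a direction orthogonal to \(v\) that no nonzero vector can belong to \(\partial B^p f(p)\) when \(f\) is differentiable at \(p\).

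\textbf{Step 1: the zero vector.} By the Fermat rule \eqref{eq:Fermat-rule-Bp} of Theorem~\ref{thm:FY-Busemann-characterization}, \(0_p\in\partial B^p f(p)\) if and only if \(p\in\operatorname*{argmin}f\). It therefore suffices to show that \(\partial B^p f(p)\) contains no nonzero vector. The two cases of the displayed formula then follow at once.

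\textbf{Step 2: the support inequality at \(p\).} Suppose \(v\in\partial B^p f(p)\) with \(v\neq0\), and set \(s:=\|v\|\) and \(e:=v/s\).

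\begin{itemize}
\item At \(x=p\) the ray \(\gamma_{(p,-e)}\) is trivially asymptotic to itself, so \(\eta_{p,p}(v)=-e\). Hence
\[
y:=Y_p(p,v)=\exp_p(se).
\]
\item Since \(H_p(p,y)=0\), the support inequality \eqref{eq:Bp-support} reads
\[
f(z)\geq f(p)+H_p(z,y),\qquad z\in M.
\]
\item Because \(n\geq2\), we may pick a unit vector \(\theta\in T_pM\) with \(\theta\perp e\).
\item For \(t>0\), set \(z_t^{\pm}:=\exp_p(\pm t\theta)\). Apply Proposition~\ref{prop:Hp-asymmetry-phi} with \(\eta=-e\), radial parameter \(s\), and direction \(\pm\theta\) with radial parameter \(t\). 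Both \(\pm\theta\) are orthogonal to \(\eta\), so this gives
\[
H_p(z_t^{\pm},y)=\frac{t}{k}\log\bigl(\cosh(ks)\bigr).
\]
Alternatively, this follows directly from Lemma~\ref{lem:Busemann-constant-curvature} with \(\cos\alpha=0\).
\end{itemize}

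\textbf{Step 3: contradiction with differentiability.} Write \(c:=\frac1k\log(\cosh(ks))\), which is strictly positive since \(s>0\). Differentiability of \(f\) at \(p\) gives
\[
f(z_t^{\pm})=f(p)\pm t\langle \operatorname{grad}f(p),\theta\rangle+o(t).
\]
Adding the two support inequalities at \(z_t^{+}\) and \(z_t^{-}\) cancels the linear terms and leaves \(o(t)\geq 2ct\). Dividing by \(t\) and letting \(t\to0^+\) yields \(0\geq 2c>0\), which is impossible. Hence \(v=0\), and combining this with Step 1 proves the theorem.

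The only delicate point is choosing test directions in which the gradient contribution cancels while the coupling \(H_p(\cdot,y)\) stays strictly positive and of first order in \(t\). The symmetric pair \(\pm\theta\) orthogonal to \(e\) achieves exactly this. It requires \(n\geq2\), and it requires \(\log\cosh(ks)>0\), which is where negative curvature enters. In the Euclidean case this term vanishes and the argument correctly fails.
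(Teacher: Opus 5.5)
Your proof is correct and follows essentially the same route as the paper. You use the Fermat rule for the zero vector, then test the support inequality at \(\exp_p(\pm t\theta)\) with \(\theta\perp v\), where Proposition~\ref{prop:Hp-asymmetry-phi} gives the strictly positive first-order increment \(\frac{t}{k}\log(\cosh(ks))\), and add the two inequalities so the gradient terms cancel. Your explicit check that \(\eta_{p,p}(v)=-v/\|v\|\), and hence \(Y_p(p,v)=\exp_p(v)\), is a detail the paper takes for granted.
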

\begin{proof}
By the Fermat rule in Theorem~\ref{thm:FY-Busemann-characterization},
$0_p\in\partial B^p f(p)$ if and only if $p\in\operatorname*{argmin}_{z\in M}f(z).
$
It therefore remains to prove that \(\partial B^p f(p)\) cannot contain a
nonzero vector.
Suppose, to the contrary, that $v\in\partial B^p f(p)$, for some $v\neq0$ and set
$s:=\|v\|$, $y:=Y_p(p,v)=\exp_p(v)$.
Since \(n\geq2\), there exists a unit vector \(u\in T_pM\) orthogonal to
\(v\). For \(t>0\), define
$$
z_t^+:=\exp_p(tu),
    \qquad
    z_t^-:=\exp_p(-tu).
$$
The support inequality defining \(v\in\partial B^p f(p)\), together with
\(H_p(p,y)=0\), gives

$$
    f(z_t^\pm)-f(p)\geq H_p(z_t^\pm,y).
$$
Applying Proposition~\ref{prop:Hp-asymmetry-phi} to the orthogonal
directions \(u\) and \(v/\|v\|\), we obtain

$$
    H_p(z_t^\pm,y)
    =
    \frac{t}{k}\log\bigl(\cosh(ks)\bigr),
$$
and, consequently,
$   f(z_t^\pm)-f(p)
    \geq
    (t/k)\log\bigl(\cosh(ks)\bigr).
$
Because \(f\) is differentiable at \(p\), one has
$$
    f(z_t^+)-f(p)
    =
t\langle\operatorname{grad}f(p),u\rangle+o(t),\quad f(z_t^-)-f(p)
    =
    -t\langle\operatorname{grad}f(p),u\rangle+o(t).
$$
Dividing the preceding support inequalities by \(t\) and letting
\(t\downarrow0\) yields
$$
\langle\operatorname{grad}f(p),u\rangle
    \geq
    \frac{1}{k}\log\bigl(\cosh(ks)\bigr),\qquad  -\langle\operatorname{grad}f(p),u\rangle
    \geq
    \frac{1}{k}\log\bigl(\cosh(ks)\bigr).
$$
Adding these inequalities gives
$    0
    \geq
(2/k)\log\bigl(\cosh(ks)\bigr),$
which is impossible because \(s>0\). Hence,
$$
    \partial B^p f(p)\subseteq\{0_p\}.
$$
Combining this inclusion with the Fermat rule proves the result.
\end{proof}

\begin{Remark}[Nonemptiness, variational meaning, and negative curvature]
The preceding result should be interpreted as a rigidity property of the
\(B^p\)-subdifferential at its base point. In hyperbolic space, a
nonzero vector \(v\in T_pM\) produces strictly positive Busemann
increments along both orientations of every geodesic through \(p\)
orthogonal to \(v\). For a function differentiable at \(p\), the resulting
global support inequalities are incompatible with its first-order
behavior.
Combined with the Fenchel--Young characterization in
Theorem~\ref{thm:FY-Busemann-characterization}, this rigidity yields a
statement stronger than the Fermat rule alone: mere nonemptiness of the
\(B^p\)-subdifferential is equivalent both to global minimality and to the
vanishing of the Fenchel-Young gap at the base point. 
Thus, nonemptiness provides a sharp primal--dual certificate of global
minimality. This rigidity has no Euclidean analogue and illustrates how
negative curvature strengthens the global character of the Busemann
support condition.

Although Hamilton and Moitra
\cite{hamilton2021nogo} and Criscitiello and Boumal
\cite{criscitiello2022negative} have dealt with a different mathematical
question, they reflect the same general principle. In their setting,
negative curvature obstructs Euclidean-type acceleration for smooth,
strongly geodesically convex optimization. Here, it prevents a differentiable
function from admitting a nonzero global \(B^p\)-support at the base
point. Thus, Euclidean-type regularity assumptions do not by themselves
ensure that global structures of convex analysis and optimization retain
their Euclidean behavior in negatively curved spaces.
\end{Remark}

\section{Fenchel biconjugation and a measure of nonlinearity}
\label{Sec4}

The purpose of this section is to identify, in constant sectional curvature
\(-k^2\), the geometric quantity underlying two phenomena that have no
counterpart in Euclidean convex analysis. The first is the structural
difference, discussed in the preceding section, between our subdifferential
and the Busemann-based subdifferentials proposed in the literature. The second
is the possible failure of exact Fenchel biconjugation. Although these
phenomena concern different parts of the theory, both are governed by the
same lack of reciprocity of the Busemann coupling, namely, by the difference
\[
    H_p(z,y)-H_p(y,z).
\]
Our aim is to show that the maximal size of this asymmetry provides a common
\emph{measure of nonlinearity}: it both distinguishes the two types of
Busemann supports and quantifies the deviation of a function from its
biconjugate.

This perspective is motivated by the Fenchel-Moreau theorem, one of the
fundamental results of classical convex analysis. In finite-dimensional
Euclidean spaces, a proper function coincides with its biconjugate if and only
if it is lower semicontinuous and convex, and the classical inequality
\(f^{**}\leq f\) relies on the symmetry of the underlying bilinear pairing.
For a Hadamard manifold of constant sectional curvature \(-k^2\), the
Busemann coupling \(H_p\) is generally not symmetric. Consequently, the
relative order between a function and the biconjugate obtained by applying
the same conjugation twice cannot be inferred from the classical argument and
must instead be analyzed through the above measure of nonlinearity.

This phenomenon already appears for Busemann functions. More precisely, let
\(M\) have constant sectional curvature \(-k^2\), with \(k>0\), and let
\(\bar\eta\in T_pM\) be a unit vector. It was proved in
\cite[Theorem~3]{bento2023fenchel} that
\begin{equation}
    \label{eq:busemann-biconjugate-gap}
    \bigl(B^p_{\bar\eta}\bigr)^{**}
    =
    B^p_{\bar\eta}
    -\frac{1}{k}\log\!\left(\cosh\!\left(\frac{k}{2}\right)\right).
\end{equation}
Thus, even though \(B^p_{\bar\eta}\) is proper, lower semicontinuous, and
convex, it does not coincide with its biconjugate. The positive constant
\begin{equation}
    \label{eq:Ck-definition}
    C_k
    :=
    \frac{1}{k}\log\!\left(\cosh\!\left(\frac{k}{2}\right)\right)
\end{equation}
is precisely the difference between the Busemann function and its
biconjugate in this example and explicitly records the influence of the
sectional curvature.

We next introduce the geometric quantity that controls this gap. Given
\(p\in M\), define
\begin{equation}
    \label{eq:Np-definition}
    \mathcal N^p(y)
    :=
    \inf_{z\in M}
    \bigl\{H_p(y,z)-H_p(z,y)\bigr\},
    \qquad y\in M.
\end{equation}
Equivalently,
\begin{equation}
    \label{eq:Np-supremum}
    -\mathcal N^p(y)
    =
    \sup_{z\in M}
    \bigl\{H_p(z,y)-H_p(y,z)\bigr\}.
\end{equation}
Since \(H_p(p,y)=H_p(y,p)=0\), one has \(\mathcal N^p(y)\leq0\).
Therefore, \(-\mathcal N^p(y)\) is a nonnegative measure of the maximal
failure of symmetry of the coupling \(H_p\) at the point \(y\), relative to
the base point \(p\). In particular, if \(M=\mathbb R^n\), then
\[
    H_p(y,z)=\langle y-p,z-p\rangle=H_p(z,y),
\]
and hence \(\mathcal N^p\equiv0\). This motivates viewing
\(-\mathcal N^p\) as a measure of the nonlinearity detected by the Busemann
coupling.

The asymmetry in \eqref{eq:Np-supremum} also shows that, contrary to the
Euclidean case, neither inequality between \(f\) and \(f_p^{**}\) holds for
an arbitrary proper, lower semicontinuous, convex function. Indeed, fix
\(x\in M\) and consider the indicator function
\[
    \delta_{\{x\}}(z)
    :=
    \begin{cases}
        0, & z=x,\\
        +\infty, & z\neq x.
    \end{cases}
\]
This function is proper, lower semicontinuous, and geodesically convex, and
its conjugate is
\[
    \bigl(\delta_{\{x\}}\bigr)_p^{*}(y)=H_p(x,y).
\]
Consequently,
\begin{equation}
    \label{eq:indicator-biconjugate-Np}
    \bigl(\delta_{\{x\}}\bigr)_p^{**}(x)
    =
    \sup_{y\in M}\bigl\{H_p(y,x)-H_p(x,y)\bigr\}
    =
    -\mathcal N^p(x).
\end{equation}
Whenever \(\mathcal N^p(x)<0\), one therefore has
\(
    \bigl(\delta_{\{x\}}\bigr)_p^{**}(x)>\delta_{\{x\}}(x)
\).
Together with \eqref{eq:busemann-biconjugate-gap}, where the opposite strict
inequality holds, this example demonstrates that the deviation from exact
biconjugation is intrinsically two-sided. 

Accordingly, an appropriate curvature-dependent estimate must control both possible signs of the biconjugation gap rather than presuppose the Euclidean ordering. The following result determines the intrinsic nonlinearity measure exactly at an arbitrary distance from the base point in constant negative curvature. In particular, its unit-radius specialization recovers the constant appearing in \eqref{eq:busemann-biconjugate-gap}.

\begin{Proposition}
\label{prop:Np-arbitrary-radius}
Let \(p\in M\), and assume that \(M\) has constant sectional curvature
\(-k^2\), where \(k>0\). Then, for every \(y\in M\), setting
\(\rho:=d(y,p)\), one has
\begin{equation*}
\label{eq:Np-arbitrary-radius}
    \mathcal N^p(y)
    =
    -\frac{\rho}{k}
    \log\!\left(
        \cosh\!\left(\frac{k\rho}{2}\right)
    \right).
\end{equation*}
Equivalently, for \(\rho>0\),
\begin{equation*}
\label{eq:Np-arbitrary-radius-phi}
    \mathcal N^p(y)
    =
    -\frac{\rho^2}{2}
    \varphi\!\left(\frac{k\rho}{2}\right),\qquad \mbox{where}\quad 
\varphi(t)=(\log\cosh t)/t.
\end{equation*}
In particular, if \(\rho=1\), then
\[
    \mathcal N^p(y)
    =
    -\frac{1}{k}
    \log\!\left(\cosh\!\left(\frac{k}{2}\right)\right)
    =-C_k.
\]
\end{Proposition}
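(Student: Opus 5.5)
The plan is to compute the supremum in \eqref{eq:Np-supremum} directly in geodesic polar coordinates at \(p\). Afterwards I reduce the resulting two-parameter optimization to a dimensionless problem.

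\textbf{Parametrization.} The case \(\rho=0\) is trivial, since \(H_p(p,\cdot)=H_p(\cdot,p)=0\) gives \(\mathcal N^p(p)=0\). So assume \(\rho>0\) and write \(y=\exp_p(-\rho\eta)\) with \(\|\eta\|=1\). For \(z\neq p\), write \(z=\exp_p(tu)\) with \(t=d(p,z)>0\) and \(\|u\|=1\), and set \(c:=\langle u,\eta\rangle\in[-1,1]\).

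\textbf{Both couplings via Lemma~\ref{lem:Busemann-constant-curvature}.} This is the same computation as in Proposition~\ref{prop:Hp-asymmetry-phi}, without the orthogonality restriction.
\begin{itemize}
\item For the first term, \(H_p(z,y)=tB^p_{-u}(y)\) with \(y=\exp_p(-\rho\eta)\). The relevant angle is the one between \(-\eta\) and \(-u\), so
\[
H_p(z,y)=\frac tk\log\bigl(\cosh(k\rho)-c\sinh(k\rho)\bigr).
\]
\item For the second term, \(-\exp_p^{-1}y=\rho\eta\), so \(H_p(y,z)=\rho B^p_{\eta}(z)\) with \(z=\exp_p(-t(-u))\). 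This gives
\[
H_p(y,z)=\frac\rho k\log\bigl(\cosh(kt)-c\sinh(kt)\bigr).
\]
\end{itemize}

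\textbf{Dimensionless problem.} Set \(a:=k\rho\) and \(b:=kt\). Then
\[
k^2\bigl[H_p(z,y)-H_p(y,z)\bigr]=\Phi_a(b,c):=b\log(\cosh a-c\sinh a)-a\log(\cosh b-c\sinh b).
\]
The proposition is therefore equivalent to \(\sup_{b\ge0,\,c\in[-1,1]}\Phi_a(b,c)=a\log\cosh(a/2)\). This matches \(-\mathcal N^p(y)=\tfrac{\rho}{k}\log\cosh(k\rho/2)\). The \(\varphi\)-form and the unit-radius case \(\rho=1\) then follow by rewriting, and they agree with \(C_k\) from \eqref{eq:Ck-definition}. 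One could also use \eqref{eq:Busemann-coupling-homothetic-scaling} to normalize \(k=1\), but the substitution \(a,b\) already does this.

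\textbf{Boundary behaviour.}
\begin{itemize}
\item At \(c=\pm1\): \(\Phi_a\equiv0\), since \(\log(\cosh a\mp\sinh a)=\mp a\).
\item At \(b=0\): \(\Phi_a=0\).
\item As \(b\to\infty\) with \(|c|<1\): \(\log(\cosh b-c\sinh b)\sim b\), so \(\Phi_a\sim b\bigl[\log(\cosh a-c\sinh a)-a\bigr]\). Since \(\cosh a-c\sinh a\le e^{a}\), this is at most \(0\) asymptotically.
\end{itemize}
Hence the supremum is attained at an interior critical point, which I determine from \(\partial_b\Phi_a=0\) and \(\partial_c\Phi_a=0\):
\[
\log(\cosh a-c\sinh a)=\frac{a(\sinh b-c\cosh b)}{\cosh b-c\sinh b},\qquad
\frac{b\sinh a}{\cosh a-c\sinh a}=\frac{a\sinh b}{\cosh b-c\sinh b}.
\]
My first guess for the maximizer is \(b=a/2\) (that is, \(t=\rho/2\)) with \(c\) chosen so that \(\cosh a-c\sinh a=\cosh^2(a/2)\cdot(\text{explicit factor})\). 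Substituting into \(\Phi_a\) should produce \(a\log\cosh(a/2)\) via the half-angle identities \(\cosh a=2\cosh^2(a/2)-1\) and \(\sinh a=2\sinh(a/2)\cosh(a/2)\). As a sanity check, at \(\rho=1\) this must reproduce the constant of \cite[Theorem~3]{bento2023fenchel}.

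\textbf{Main obstacle.} Solving the critical-point system exactly and proving that the critical value is the global maximum is the hard step, rather than just a stationary value. If the explicit solution of the two equations resists, I would first maximize in \(c\) for fixed \(b\); the map \(c\mapsto\Phi_a(b,c)\) is a difference of concave logarithms of affine functions, and its unique critical point is explicit. I would then study the resulting one-variable function of \(b\), proving an upper bound \(\Phi_a(b,c)\le a\log\cosh(a/2)\) by a convexity/AM–GM-type estimate on \(\cosh a-c\sinh a\). Finally, I would exhibit the point \((b,c)\) where equality holds.
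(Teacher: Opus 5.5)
\textbf{There is a genuine gap: the supremum is never computed.} Your reduction is sound. Both Busemann evaluations via Lemma~\ref{lem:Busemann-constant-curvature} are correct, and the proposition is indeed equivalent to $\sup_{b\ge0,\,c\in[-1,1]}\Phi_a(b,c)=a\log\cosh(a/2)$. But computing that supremum is the whole content of the statement, and the proposal does not do it. You give no maximizer, only ``$b=a/2$ with $c$ chosen so that \dots''. You give no global upper bound. Your argument forcing an interior maximizer is only pointwise in $c$, since the decay as $b\to\infty$ is not shown to be uniform as $c\to\pm1$. So ``hence the supremum is attained at an interior critical point'' is unjustified. As written, the proof stops exactly where the work begins.

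\textbf{How to close it, and how the paper differs.} Your fallback (maximize in $c$ first) does close the gap cleanly. The equation $\partial_c\Phi_a=0$ is linear in $c$, and its solution $c^*$ gives
\[
\cosh a-c^*\sinh a=\frac{b\sinh(a-b)}{(a-b)\sinh b},
\qquad
\cosh b-c^*\sinh b=\frac{a\sinh(a-b)}{(a-b)\sinh a}.
\]
Moreover $\partial_c^2\Phi_a(b,c^*)=\lambda^2(1/a-1/b)$, where $\lambda$ is the common value at $c^*$ of $b\sinh a/(\cosh a-c\sinh a)$ and $a\sinh b/(\cosh b-c\sinh b)$. The map $c\mapsto\Phi_a(b,c)$ is not concave, and this sign splits the cases:
\begin{itemize}
\item[$\bullet$] When $b\ge a$, its maximum on $[-1,1]$ is the endpoint value $0$. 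This also settles the behaviour at infinity.
\item[$\bullet$] When $0<b<a$, the maximum is at $c^*$ (one checks $c^*\in[-1,1]$), with value $L(a)-L(b)-L(a-b)$, where $L(x):=x\log(\sinh x/x)$.
\end{itemize}
$L$ is convex: $x\sinh^2x\,L''(x)=x\sinh 2x-\sinh^2x-x^2$ vanishes at $0$ and has derivative $2x(\cosh 2x-1)\ge0$. Hence the maximum is at $b=a/2$ and equals
\[
L(a)-2L(a/2)=a\log\bigl(\sinh a/(2\sinh(a/2))\bigr)=a\log\cosh(a/2).
\]
It is attained at $c=\tanh(a/2)$, where $\cosh a-c\sinh a=1$.

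The paper bypasses this optimization entirely. After the homothety \eqref{eq:homothetic-metric} with $\lambda=\rho$, it observes that $H_p^{\rho}(y,\cdot)=B^{p,\rho}_{\eta_\rho}$. So $-\mathcal N^p_\rho(y)$ is the conjugate of a Busemann function at a unit-distance point, whose value it takes from \cite[Theorem~3]{bento2023fenchel}; it then rescales via \eqref{eq:Busemann-coupling-homothetic-scaling}. The rescaling is therefore not merely a normalization of $k$, as you suggest; it is what lets the known Busemann conjugate be reused. Your route, once completed as above, is self-contained and also exhibits the extremal point: $d(p,z)=\rho/2$ and $\langle u,\eta\rangle=\tanh(k\rho/2)$.
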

\begin{proof}
Set \(\rho:=d(y,p)\). If \(\rho=0\), then \(y=p\), and hence
\[
    \mathcal N^p(p)
    =
    \inf_{z\in M}
    \bigl\{H_p(p,z)-H_p(z,p)\bigr\}
    =0.
\]
Assume that \(\rho>0\), and consider the homothetically rescaled manifold \(M_{\rho}\) through the metric introduced in \eqref{eq:homothetic-metric}. From
\eqref{eq:homothetic-geometric-scaling},
\[
    d_{\rho}(y,p)=1,
    \qquad
    \operatorname{sec}M_{\rho}
    =
    \rho^2\operatorname{sec}M
    =
    -(k\rho)^2.
\]
Let \(H_p^{\rho}\) and \(\mathcal N_{\rho}^p\) denote, respectively,
the Busemann coupling and the intrinsic nonlinearity measure associated
with \(M_{\rho}\). From
\eqref{eq:Busemann-coupling-homothetic-scaling}, we have
\[
    H_p^{\rho}(z,w)
    =
    \frac{1}{\rho^2}H_p(z,w),
    \qquad z,w\in M.
\]
Consequently,
\begin{equation}
\label{eq:Np-homothetic-scaling}
\begin{aligned}
    \mathcal N_{\rho}^p(y)
    =
    \inf_{z\in M}
    \bigl\{
        H_p^{\rho}(y,z)-H_p^{\rho}(z,y)
    \bigr\} 
    =
    \frac{1}{\rho^2}\mathcal N^p(y).
\end{aligned}
\end{equation}
Set
\(
    \eta_{\rho}:=-\exp_p^{-1}y.
\)
Since \(d_{\rho}(y,p)=1\), the vector \(\eta_{\rho}\) is unit with
respect to the metric of \(M_{\rho}\), and
\(
    y=\exp_p^{\rho}(-\eta_{\rho}).
\)
Moreover, by the definition of the coupling,
\[
    H_p^{\rho}(y,z)
    =
    B_{\eta_{\rho}}^{p,\rho}(z),
    \qquad z\in M.
\]
Therefore,
\begin{align*}
    -\mathcal N_{\rho}^p(y)
    =
    \sup_{z\in M}
    \bigl\{
        H_p^{\rho}(z,y)-H_p^{\rho}(y,z)
    \bigr\} 
    =
    \sup_{z\in M}
    \bigl\{
        H_p^{\rho}(z,y)
        -
        B_{\eta_{\rho}}^{p,\rho}(z)
    \bigr\}.
\end{align*}
The last supremum is precisely the conjugate, computed on
\(M_{\rho}\), of \(B_{\eta_{\rho}}^{p,\rho}\) evaluated at \(y\).
Since \(M_{\rho}\) has constant sectional curvature
\(-(k\rho)^2\), it follows from
\cite[Theorem~3]{bento2023fenchel} that
\[
    -\mathcal N_{\rho}^p(y)
    =
    \frac{1}{k\rho}
    \log\!\left(
        \cosh\!\left(\frac{k\rho}{2}\right)
    \right).
\]
Combining this identity with
\eqref{eq:Np-homothetic-scaling} yields
\[
    \mathcal N^p(y)
    =
    -\frac{\rho}{k}
    \log\!\left(
        \cosh\!\left(\frac{k\rho}{2}\right)
    \right).
\]
Equivalently, using the definition of \(\varphi(\cdot)\), we get
\(
    \mathcal N^p(y)
    =
    -\frac{\rho^2}{2}
    \varphi\!\left(\frac{k\rho}{2}\right),
\)
and the conclusion of the proof follows by considering $\rho=1.$
\end{proof}
\begin{remark}
Note that \(C_k\) is sharp for both signs of the biconjugation deviation. If
\(d(x,p)=1\), then Proposition~\ref{prop:Np-arbitrary-radius} and
\eqref{eq:indicator-biconjugate-Np} yield
\[
    \bigl(\delta_{\{x\}}\bigr)_p^{**}(x)
    -\delta_{\{x\}}(x)
    =C_k,
\]
whereas \eqref{eq:busemann-biconjugate-gap} gives
\[
    B^p_{\bar\eta}
    -\bigl(B^p_{\bar\eta}\bigr)^{**}
    =C_k.
\]
Thus, the same curvature-dependent constant holds as an extremal deviation
on either side of exact biconjugation.
\end{remark}
The preceding proposition determines the exact global magnitude of the asymmetry of \(H_p\) under interchange of its arguments. The following corollary relates this extremal value to its radial variation, described by \(\varphi\) along the orthogonal configurations considered in Proposition~\ref{prop:Hp-asymmetry-phi}.

\begin{Corollary}[Orthogonal detection of the coupling asymmetry]
\label{cor:orthogonal-detection-Np}
Let \(M\) be a Hadamard manifold with constant sectional curvature
\(-k^2\), where \(k>0\), and \(p\in M\). Given orthogonal unit
vectors \(\eta,\theta\in T_pM\) and \(r,s>0\), define
$$
    y:=\exp_p(-s\eta),
    \qquad  z:=\exp_p(r\theta).
$$
If \(0<r<s\), then
\begin{equation}
\label{eq:orthogonal-asymmetry-Np}
\begin{aligned}
0
<
H_p(z,y)-H_p(y,z)\
=
rs\bigl[\varphi(ks)-\varphi(kr)\bigr]\
\leq
-\mathcal N^p(y)
=
\frac{s^2}{2}
\varphi\left(\frac{ks}{2}\right).
\end{aligned}
\end{equation}
In particular, if \(s=1\), then
$
0<r\bigl[\varphi(k)-\varphi(kr)\bigr]
    \leq
    C_k
    =
(1/2)\varphi\!\left(k/2\right).
$

\end{Corollary}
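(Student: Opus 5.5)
The plan is to assemble the chain in \eqref{eq:orthogonal-asymmetry-Np} from three earlier ingredients, one for each relation.

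\textbf{The identity.} The middle equality is exactly \eqref{eq:Hp-asymmetry-phi} of Proposition~\ref{prop:Hp-asymmetry-phi}, applied to the given orthogonal unit vectors $\eta,\theta$ and radii $r,s>0$.

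\textbf{Strict positivity.} We have $k>0$ and $0<r<s$, so $kr<ks$. Since $\varphi$ is strictly increasing, $\varphi(ks)-\varphi(kr)>0$. Multiplying by $rs>0$ gives $0<H_p(z,y)-H_p(y,z)$; this is also the last assertion of Proposition~\ref{prop:Hp-asymmetry-phi}.

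\textbf{The upper bound.} By \eqref{eq:Np-supremum}, $-\mathcal N^p(y)$ is the supremum over $z'\in M$ of $H_p(z',y)-H_p(y,z')$. Taking the particular point $z'=z$ yields $H_p(z,y)-H_p(y,z)\leq-\mathcal N^p(y)$ directly.

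\textbf{The closed form.} To evaluate $-\mathcal N^p(y)$, note that $\exp_p$ is a global diffeomorphism, so $d(p,y)=\|{-s\eta}\|=s$. Proposition~\ref{prop:Np-arbitrary-radius} with $\rho=s>0$ then gives
\[
-\mathcal N^p(y)=\frac{s^2}{2}\varphi\!\left(\frac{ks}{2}\right).
\]

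\textbf{The case $s=1$.} Set $s=1$ (so $0<r<1$). Then $rs[\varphi(ks)-\varphi(kr)]=r[\varphi(k)-\varphi(kr)]$. The right side becomes $\frac12\varphi(k/2)=\frac1k\log\cosh(k/2)=C_k$, matching the unit-radius case of Proposition~\ref{prop:Np-arbitrary-radius}.

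No step here is a real obstacle, since the corollary only combines earlier results. The one point needing care is checking that the radius fed into Proposition~\ref{prop:Np-arbitrary-radius} is $d(p,y)=s$, and not $r$ or the distance $d(p,z)$.
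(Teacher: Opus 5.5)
Your proposal is correct and matches the paper's proof essentially step for step. You take the identity from Proposition~\ref{prop:Hp-asymmetry-phi}, get positivity from the strict monotonicity of $\varphi$, obtain the upper bound by evaluating the supremum in \eqref{eq:Np-supremum} at $z$, and get the closed form from Proposition~\ref{prop:Np-arbitrary-radius} with $\rho=d(p,y)=s$, specializing to $s=1$ at the end.
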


\begin{proof}
Proposition~\ref{prop:Hp-asymmetry-phi} gives
\begin{equation}\label{eq:1111}
    H_p(z,y)-H_p(y,z)
    =
rs\bigl[\varphi(ks)-\varphi(kr)\bigr].
\end{equation}
Because \(\varphi(\cdot)\) is strictly
increasing and \(0<r<s\), it follows that the quantity in \eqref{eq:1111} is strictly positive. Moreover, by the definition of
\(\mathcal N^p\), one has
$$
    H_p(z,y)-H_p(y,z)
    \leq
    \sup_{w\in M}
    \bigl\{H_p(w,y)-H_p(y,w)\bigr\}
    =
    -\mathcal N^p(y).
$$
Since \(d(y,p)=s\), Proposition~\ref{prop:Np-arbitrary-radius} yields

$$
    -\mathcal N^p(y)
    =
    \frac{s}{k}
    \log\!\left(\cosh\!\left(\frac{ks}{2}\right)\right)
    =
    \frac{s^2}{2}
    \varphi\!\left(\frac{ks}{2}\right).
$$
The unit-radius statement follows by setting \(s=1\), which concludes the proof.
\end{proof}


\begin{remark}[A common geometric source]
The preceding discussion also makes explicit the connection with the
comparison of Busemann-based subdifferentials developed in the previous
section. Up to differences in signs, normalizations, and parametrizations,
the other constructions considered there are based on a fixed Busemann
support. More precisely, if
\[
    y=\exp_p(-s\eta),
    \qquad s>0,\quad \|\eta\|=1,
\]
then the corresponding fixed-direction support can be written as
\[
    sB_\eta^p(z)=H_p(y,z).
\]
By contrast, the support inequality defining our subdifferential involves
\[
    H_p(z,y)-H_p(x,y),
\]
and, in particular, at \(x=p\), its support term is \(H_p(z,y)\).
Consequently, the structural difference between the two types of
subdifferentials is governed by the lack of reciprocity of the Busemann
coupling,
\[
    H_p(z,y)-H_p(y,z).
\]

In constant sectional curvature \(-k^2\), the orthogonal configuration
considered above gives
\[
    H_p(z,y)-H_p(y,z)
    =
    rs\bigl[\varphi(ks)-\varphi(kr)\bigr],
    \qquad
    \varphi(t)=\frac{\log(\cosh t)}{t}.
\]
Thus, the strict monotonicity of \(\varphi\), which yields the separation
examples between the subdifferentials, measures a particular radial
manifestation of the same geometric asymmetry that is globalized by
\[
    -\mathcal N^p(y)
    =
    \sup_{z\in M}
    \bigl\{H_p(z,y)-H_p(y,z)\bigr\}.
\]
Hence, the non-reciprocity of \(H_p\) is the common geometric mechanism
that both distinguishes our subdifferential from the fixed-direction
Busemann constructions and controls the deviation from exact
biconjugation. In the Euclidean case this defect vanishes, since \(H_p\)
is symmetric, and both distinctions disappear.
\end{remark}

We next show that \(\mathcal N^p\) controls both possible signs of the
difference between a function and its biconjugate. For \(x\in M\), let
\begin{equation*}
    \label{eq:dual-points-subdifferential}
    \mathcal Y_p^f(x)
    :=
    \bigl\{Y_p(x,v):v\in\partial B^p f(x)\bigr\},
\end{equation*}
where \(Y_p(x,v)\) is the dual point associated with \(v\) by the
primal--dual correspondence established in the preceding section.

\begin{Theorem}
    \label{thm:biconjugate-gap-Np}
    Let \(f:M\to\mathbb R\cup\{+\infty\}\) be proper and let
    \(x\in\operatorname{dom}f\). Then
    \begin{equation}
        \label{eq:biconjugate-gap-lower}
        \mathcal N^p(x)
        \leq
        f(x)-f_p^{**}(x).
    \end{equation}
    If, in addition, \(\partial B^p f(x)\neq\varnothing\), then, for every
    \(y\in\mathcal Y_p^f(x)\),
    \begin{equation}
        \label{eq:biconjugate-gap-chain}
        \mathcal N^p(x)
        \leq
        f(x)-f_p^{**}(x)
        \leq
        H_p(x,y)-H_p(y,x)
        \leq
        -\mathcal N^p(y).
    \end{equation}
    Consequently,
    \begin{equation}
        \label{eq:biconjugate-gap-infimum}
        f(x)-f_p^{**}(x)
        \leq
        \inf_{y\in\mathcal Y_p^f(x)}
        \bigl\{-\mathcal N^p(y)\bigr\}.
    \end{equation}
\end{Theorem}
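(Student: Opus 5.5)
The plan is to unfold the definition of the biconjugate,
\[
f_p^{**}(x)=\sup_{y\in M}\bigl\{H_p(y,x)-f_p^*(y)\bigr\},
\]
and to combine it with the Fenchel--Young inequality of Remark~\ref{Rem-1001}(c), the equality characterization in Theorem~\ref{thm:FY-Busemann-characterization}, and the definition \eqref{eq:Np-definition} of \(\mathcal N^p\). No geometry beyond these facts should be needed. Everything reduces to a careful bookkeeping of the two orientations of \(H_p\).

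\textbf{Lower bound.} For any \(y\in M\), the Fenchel--Young inequality gives \(f_p^*(y)\geq H_p(x,y)-f(x)\). Hence
\[
H_p(y,x)-f_p^*(y)\leq f(x)+H_p(y,x)-H_p(x,y).
\]
Taking the supremum over \(y\) yields
\[
f_p^{**}(x)\leq f(x)+\sup_y\{H_p(y,x)-H_p(x,y)\}=f(x)-\mathcal N^p(x).
\]
Here the identity \(\sup_y\{H_p(y,x)-H_p(x,y)\}=-\mathcal N^p(x)\) is exactly \eqref{eq:Np-supremum}, with the roles of the variables renamed. Since \(f(x)\) is finite, this rearranges to \eqref{eq:biconjugate-gap-lower}. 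One should also note that \(f_p^*(y)>-\infty\) because \(f\) is proper, so the subtraction is legitimate. If \(f_p^{**}(x)=-\infty\), the inequality is trivial.

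\textbf{Upper bound.} Let \(v\in\partial B^pf(x)\) and \(y=Y_p(x,v)\). By Lemma~\ref{lem:Y-V-inverse-H-support}, \(V_p(x,y)=v\). Then Theorem~\ref{thm:FY-Busemann-characterization} gives the equality \(f_p^*(y)=H_p(x,y)-f(x)\); in particular \(f_p^*(y)\) is finite. Using this single \(y\) as a competitor in the supremum defining \(f_p^{**}(x)\),
\[
f_p^{**}(x)\geq H_p(y,x)-f_p^*(y)=f(x)+H_p(y,x)-H_p(x,y),
\]
that is, \(f(x)-f_p^{**}(x)\leq H_p(x,y)-H_p(y,x)\). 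The last inequality in \eqref{eq:biconjugate-gap-chain} is immediate from \eqref{eq:Np-supremum}: take \(z=x\) in \(\sup_z\{H_p(z,y)-H_p(y,z)\}\). Finally, \eqref{eq:biconjugate-gap-infimum} follows by taking the infimum over \(y\in\mathcal Y_p^f(x)\).

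\textbf{Main obstacle.} The only delicate point is keeping the orientation straight. The conjugate uses \(H_p(\cdot,y)\), while the second conjugation pairs through \(H_p(y,\cdot)\). Each step must therefore be matched against the correct form of \(\mathcal N^p\): the lower bound uses \(\mathcal N^p\) at \(x\), and the upper bound uses \(\mathcal N^p\) at \(y\). One must also handle the extended-real arithmetic, namely \(f_p^*(y)\in(-\infty,+\infty]\), and allow \(f_p^{**}(x)\) to be \(\pm\infty\) a priori. The upper bound shows \(f_p^{**}(x)>-\infty\), and the lower bound shows \(f_p^{**}(x)<+\infty\) whenever \(\mathcal N^p(x)>-\infty\).
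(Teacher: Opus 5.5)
Your proposal is correct and follows the paper's proof essentially step for step. The lower bound comes from inserting the Fenchel--Young inequality into the supremum defining $f_p^{**}(x)$ and applying \eqref{eq:Np-supremum} at $x$; the upper chain comes from the Fenchel--Young equality at $y=Y_p(x,v)$ given by Theorem~\ref{thm:FY-Busemann-characterization}, using $y$ as a competitor in $f_p^{**}(x)$, and then taking $z=x$ in the definition of $\mathcal N^p(y)$, while your explicit handling of the extended-real values is a harmless refinement the paper leaves implicit.
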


\begin{proof}
    By the definition of the conjugate, for every \(w\in M\),
    \(
        f_p^*(w)
        \geq
        H_p(x,w)-f(x).
    \)
    Therefore,
    \[
        H_p(w,x)-f_p^*(w)
        \leq
        f(x)+H_p(w,x)-H_p(x,w).
    \]
    Taking the supremum over \(w\in M\) and using
    \eqref{eq:Np-supremum}, we obtain
    \(
        f_p^{**}(x)
        \leq
        f(x)-\mathcal N^p(x),
    \)
    which is equivalent to \eqref{eq:biconjugate-gap-lower}.

    Now fix \(y\in\mathcal Y_p^f(x)\). By the primal--dual characterization of
    the Busemann subdifferential established in the preceding section, the
    Fenchel-Young equality holds:
    \[
        f(x)+f_p^*(y)=H_p(x,y).
    \]
    On the other hand, the definition of the biconjugate gives
    \[
        f_p^{**}(x)
        =
        \sup_{w\in M}\bigl\{H_p(w,x)-f_p^*(w)\bigr\}
        \geq
        H_p(y,x)-f_p^*(y).
    \]
    Subtracting the latter inequality from the Fenchel-Young equality yields
    \[
        f(x)-f_p^{**}(x)
        \leq
        H_p(x,y)-H_p(y,x).
    \]
    Furthermore, taking \(z=x\) in \eqref{eq:Np-definition}, we obtain
    \(
        \mathcal N^p(y)
        \leq
        H_p(y,x)-H_p(x,y),
    \)
    or equivalently,
    \[
        H_p(x,y)-H_p(y,x)
        \leq
        -\mathcal N^p(y).
    \]
    Combining these inequalities with \eqref{eq:biconjugate-gap-lower} proves
    \eqref{eq:biconjugate-gap-chain}. Since the estimate holds for every
    \(y\in\mathcal Y_p^f(x)\), taking the infimum yields
    \eqref{eq:biconjugate-gap-infimum}.
\end{proof}

\begin{Remark}
    \label{rem:infimum-domain}
    The infimum in \eqref{eq:biconjugate-gap-infimum} is taken over the dual
    points generated by \(\partial B^p f(x)\), not over all of \(M\). This
    restriction is essential: the upper estimate for
    \(f(x)-f_p^{**}(x)\) relies on equality in the Fenchel-Young inequality,
    which is guaranteed precisely by the primal--dual points associated with
    Busemann subgradients. By contrast, the lower estimate
    \eqref{eq:biconjugate-gap-lower} holds for every proper function and
    involves \(\mathcal N^p(x)\), rather than a dual point generated by the
    subdifferential.

The preceding results show that the deviation from exact biconjugation is not
an unrelated error term: both of its possible signs are controlled by the
failure of reciprocity of the Busemann coupling. In constant negative
curvature, this defect is measured sharply by \(C_k\); in the orthogonal
hyperbolic configuration, its radial variation is detected by the strictly
increasing function \(\varphi(\cdot)\). Thus, the same measure of nonlinearity that
separates our subdifferential from the fixed-direction Busemann constructions
also controls the failure of exact biconjugation.
\end{Remark}
\begin{remark}\label{rem:Goodwin-adjoint-biconjugation}
Goodwin et al.\ also establish a Fenchel--Moreau-type result; for
details and notation, see
\cite[Proposition~3.5]{goodwin2026stochastic}. Their result, however,
concerns an adjoint biconjugation between two distinct function spaces,
rather than the iteration of a single conjugation operator. Indeed,
\(f^\bullet\) maps functions on the Hadamard space \(X\) to functions
on the boundary cone \(CX^\infty\), whereas the adjoint transform
\(g^\circ\) maps functions on \(CX^\infty\) back to \(X\). Thus, their
biconjugate is the composition
\[
   \overline{\mathbb R}^{\,X}
   \xrightarrow{\ \bullet\ }
   \overline{\mathbb R}^{\,CX^\infty}
   \xrightarrow{\ \circ\ }
   \overline{\mathbb R}^{\,X}.
\]
Accordingly, the term \emph{biconjugate} in their framework refers to
\((f^\bullet)^\circ\), not to the result of applying
\((\cdot)^\bullet\) twice.
When \(X=M\) is a Hadamard manifold and \(p\in M\) is fixed, the
identification \(CM^\infty\simeq T_pM\) yields
\[
   \overline{\mathbb R}^{\,M}
   \xrightarrow{\ \bullet_p\ }
   \overline{\mathbb R}^{\,T_pM}
   \xrightarrow{\ \circ_p\ }
   \overline{\mathbb R}^{\,M},
\]
with coupling
\[
   c_p(x,v)
   =
   \|v\|B^p_{-v/\|v\|}(x)
   =
   H_p\bigl(\exp_p(v),x\bigr),
   \qquad v\neq0,
\]
and \(c_p(x,0)=0\). Hence, \((f^\bullet)^\circ\) is generated by an
adjoint pair of transforms that preserves the primal--dual ordering of
this coupling. Their Fenchel--Moreau-type result therefore concerns the
closure induced by this adjoint pair.

By contrast, our conjugation maps functions on \(M\) to functions on
\(M\), and the biconjugate \(f_p^{**}\) is obtained by reapplying the
same operator:
\[
    f_p^{**}(x)
    =
    \sup_{y\in M}
    \bigl\{H_p(y,x)-f_p^*(y)\bigr\}.
\]
While the first conjugation is defined through \(H_p(x,y)\), its second
application introduces \(H_p(y,x)\) and therefore interchanges the
arguments of \(H_p\). Consequently,  the two constructions
address different biconjugation problems. This distinction disappears
when \(H_p\) is symmetric, as in the Euclidean setting.
\end{remark}

\section{Conclusion}\label{Sec6}
This paper continues and expands the research line initiated in
\cite{bento2022combinatorial} and advanced in
\cite{bento2023fenchel}. We introduced an intrinsic subdifferential
framework on Hadamard manifolds based on Busemann functions and the
base-point-dependent coupling \(H_p\). Its central geometric ingredient
is the pair of mutually inverse mappings
$$ Y_p(x,\cdot):T_xM\longrightarrow M
 \qquad\text{and}\qquad V_p(x,\cdot):M\longrightarrow T_xM,
$$
which identify each dual point with a tangent vector whose norm records
its distance from the base point and whose direction is determined by
the corresponding asymptotic geodesic class. Through this intrinsic
parametrization, attainment in the definition of the conjugate becomes
a tangent-space subdifferential characterization. Consequently, for
arbitrary proper functions, the \(B^p\)-subdifferential characterizes
precisely the primal-dual pairs at which equality holds in the
associated Fenchel-Young inequality, while its zero vector yields the
exact Fermat rule. We also established covariance under isometries and
obtained an explicit description for a class of radial convex functions,
complementing the radial conjugacy theory developed in
\cite{bento2023fenchel}.

Beyond these structural properties, we characterized the natural
finite-valued class on which the new subdifferential is everywhere
nonempty. More precisely,
$$
    \partial B^p f(x)\neq\varnothing
    \quad\text{for every }x\in M
$$
if and only if \(f\) admits a pointwise-attained \(H_p\)-envelope
representation. Hence, nonemptiness is equivalent to the existence of
global supports generated by the correctly oriented sections
\(H_p(\cdot,y)\), active at every point. This class includes finite
maxima of such sections, the radial convex functions considered here,
and the nonsmooth functions \(\lambda d(p,\cdot)\), \(\lambda>0\).
In the complementary differentiable setting, we established a rigidity
property at the distinguished base point: on hyperbolic space,
$$
\begin{aligned}
    \partial B^p f(p)\neq\varnothing
    &\quad\Longleftrightarrow\quad
    \partial B^p f(p)=\{0_p\}\\
    &\quad\Longleftrightarrow\quad
    p\in\operatorname*{argmin}_{z\in M}f(z)\\
    &\quad\Longleftrightarrow\quad
    f(p)+f_p^*(p)=H_p(p,p)=0.
\end{aligned}
$$
Thus, the new theory provides both a functional characterization of
global subdifferentiability and a sharp primal-dual certificate of
global minimality at the base point.

Two explicit geodesically convex functions on the Poincaré disk
establish strict noninclusions between our construction and
fixed-direction Busemann subdifferentials. These examples show that the
latter cannot, in general, replace the \(B^p\)-subdifferential in the
Fenchel--Young equality characterization, although all these
constructions coincide in Euclidean spaces. Importantly, the function
in the first separation example is generated by the reversed section
\(H_p(y,\cdot)\), whereas the natural generators of the present theory
are the sections \(H_p(\cdot,y)\). The example therefore reveals a
separation between two support theories induced by the orientation of
the coupling, rather than a failure of the \(B^p\)-subdifferential on
its own generating class.

Our analysis further identifies the asymmetry of \(H_p\) under
interchange of its arguments as the common geometric mechanism
underlying both this incomparability and the failure of exact Fenchel
biconjugation. In constant negative curvature, a strictly increasing
scalar profile determines the sign and radial variation of this
asymmetry. Its maximal extent, encoded by the intrinsic nonlinearity
measure \(-\mathcal N^p\), now acquires a precise
mathematical role. We established a  lower bound for the
biconjugation gap and, at points admitting \(B^p\)-subgradients,
corresponding two-sided estimates.  This problem concerns the iteration of the same conjugation
operator, which reverses the arguments of \(H_p\), and is therefore
distinct from adjoint biconjugation frameworks that preserve the
primal--dual orientation of their coupling.
Furthermore, if \(M\) has constant sectional curvature \(-k^2\) and
\(\rho=d(y,p)>0\), we obtained the exact formula

$$
    \mathcal N^p(y)
    =
    -\frac{\rho}{k}
    \log\!\left(
        \cosh\!\left(\frac{k\rho}{2}\right)
    \right).
$$
At unit distance from the base point, this identity recovers the
curvature-dependent constant appearing in the biconjugation formula for
Busemann functions, and extremal deviations of both signs attain this
constant. Consequently, a direct extension of the classical
Fenchel-Moreau theorem cannot be expected without additional geometric
or functional assumptions.
These results provide a basis for further developments of
Busemann-based convex analysis. Natural directions include extending
the exact constant-curvature formula for \(-\mathcal N^p\) to Hadamard
manifolds with variable pinched curvature,
$$
    -b^2
    \leq
    \operatorname{sec}_M
    \leq
    -a^2
    <0,
    \qquad
    0<a<b,
$$
developing broader verifiable conditions ensuring attainment and
nonemptiness for \(H_p\)-envelopes, and investigating the role of the
proposed subdifferential in duality theory and optimization algorithms
on Hadamard manifolds.


\section*{Acknowledgments}
This work was funded by the CNPq Grants  302156/2022-4.

\section*{Declarations}
The authors declare that they have no conflict of interest.


\end{document}